\tikzset{%
  ,
            base/.style = {rectangle, rounded corners, draw=black,
                           minimum width=4cm, minimum height=1cm,
                           text centered},
  activityStarts/.style = {base, fill=blue!30},
       startstop/.style = {base, fill=red!30},
    activityRuns/.style = {base, fill=green!30},
         process/.style = {base, minimum width=2.5cm, fill=white,},
}
\newtheorem{theorem}{Theorem}[section]
\newtheorem{lem}[theorem]{Lemma}
\newtheorem{ass}[theorem]{Assumption}
\theoremstyle{definition}
\newtheorem{definition}[theorem]{Definition}
\newtheorem{rem}[theorem]{Remark}
\title[Finite-Dimensional Controllers for Robust Regulation ] 
      {Finite-Dimensional Controllers for Robust Regulation of Boundary Control Systems}
\author[Duy Phan and Lassi Paunonen]{}
\subjclass{Primary:  93C05, 93B52, 93D09
; Secondary: 35K10.}
 \keywords{distributed parameter systems,  robust output regulation, finite-dimensional controllers, feedback boundary controls, Galerkin approximation..}
 \email{duy.phan-duc@uibk.ac.at}
 \email{lassi.paunonen@tuni.fi}
\thanks{$^*$ Corresponding author: duy.phan-duc@uibk.ac.at}
\begin{document}
\maketitle

\centerline{\scshape Duy Phan$^*$}
\medskip
{\footnotesize
 \centerline{Institut f\"ur Mathematik, Leopold-Franzens-Universit\"at Innsbruck}
   \centerline{Technikerstra\ss e 13/7, A-6020 Innsbruck, Austria.}
} 

\medskip

\centerline{\scshape Lassi Paunonen}
\medskip
{\footnotesize
 \centerline{Mathematics, Faculty of Information Technology and Communication Sciences,}
   \centerline{Tampere University,}
   \centerline{PO. Box 692, 33101 Tampere, Finland.}
}

\bigskip

 \centerline{(Communicated by the associate editor name)}

\begin{abstract}
We study the robust output regulation of linear boundary control systems by constructing extended systems. The extended systems are established based on solving static differential equations under two new conditions. 
We first consider the abstract setting and present finite-dimensional reduced order controllers.  
The controller design is then used for particular PDE models: high-dimensional parabolic equations and beam equations with Kelvin-Voigt damping. Numerical examples will be presented using Finite Element Method. 
\end{abstract}

\maketitle

\section{Introduction}
We consider linear boundary control systems of the form \cite[Chapter 10]{TucWei09}
\begin{align*}
\dot{w}(t) &= \cA w(t), \qquad w(0) = w_0, \\
\cB w(t) &= u(t), \\
y(t) &= C_0 w(t)
\end{align*}
on a Hilbert space $X_0$ where $C_0$ is a bounded linear operator. 
The main aim of robust output regulation problem for boundary control systems is to design a dynamic error feedback controller so that the output $y(t)$ of the linear infinite-dimensional boundary control system  converges to a given reference signal $\yref (t)$, i.e. 
\begin{align*}
\| y(t) - \yref(t)\| \to 0, \quad \text{as~~} t \to \infty. 
\end{align*}
In addition, the control is required to be robust in the sense that the designed controller achieves the output tracking and disturbance rejection even under uncertainties and perturbations in the parameters of the system. 

\medskip
The robust output regulation and internal model based controller design for linear
  infinite-dimensional systems and PDEs --- with both distributed and boundary control --- has been considered in several articles, see~\cite{LogTow97,HamPohMMAR02,RebWei03,Imm07a,HamPoh10,Pau16a} and references therein.
In \cite{PauPhan19}, two finite-dimensional low-order robust controllers for parabolic control systems with distributed inputs and outputs were constructed. The main aim of this paper is to extend this design for linear boundary control systems. However, the main challenge is that the boundary input generally corresponds to an unbounded input operator. To tackle this issue, we construct an extended system with a new state variable $x = (v, u)^\top = (w - Eu, u)^\top$ where $E$ is an extension operator in such a way that the input operator of the new system is bounded. 

The construction of extension operator $E$ is one of key points of this paper. 
In the literature (for example \cite[Section 3.3]{CurZwa95}), the operator $E$ is chosen to be a right inverse operator of $\cB$. However, finding an arbitrary right inverse operator is not easy.
In this paper, we propose the additional conditions to construct the operator $E$. 
The construction of $E$ is completed by solving static differential equations. 
The idea comes from recent works on boundary stabilization for PDEs (for example \cite{Bad09,PhanRod18,Rod15}) or boundary control systems in abstract form (see \cite{Sal87,Sta05,TucWei09}) . 
Under our approach, the theory of partial differential equations guarantees the existence of the extension operator $E$. 
For simple cases (such as the heat equation with Neumann boundary control in Section \ref{sec-exHeatNeu}), the construction of $E$ by the new conditions does not give significant advantages compared to the choice of a right arbitrary inverse operator. 
Nevertheless, the advantage of our new approach can see clearly in more complicated partial differential equations (for example general linear parabolic equations on multi-dimensional domains, see the numerical example in Section \ref{sec-numex-para}). 
For these cases, the construction of right inverse operators by hand is not possible. In our approach we can approximate the operator $E$ by solving differential equations numerically and use the approximation in the controller design. 

For the reference signals, we assume that $\yref: \R \to \C^p$ can be written in the form
\begin{align}
\label{eq-refsig}
\yref(t) = a_0(t) + \sum_{k=1}^q \left(a_k(t) \cos(w_k t) + b_k (t) \sin (w_k t)\right) 
\end{align}
where all frequencies $\{ w_k \}_{k=0}^q \subset \R$ with $0 = w_0 < w_1 < \dots < w_q$ are known, but the coefficient polynomials vectors $\{a_k (t)\}_{k}$ and $\{b_k (t)\}_{k}$ with real or complex coefficients (any of the polynomials are allowed to be zero) are unknown. We assume the maximum degrees of the coefficient polynomial vectors are known, so that $a_k(t) \in \C^p$ are polynomial of order at most $n_k-1$ for each $k \in \{ 0,\dots,q\}$. 
The class of signals having the form \eqref{eq-refsig} is diverse. 
In Section \ref{sec-numex-para}, we present a numerical example with non-smooth reference signals. To track non-smooth signals, we approximate them by truncated Fourier series. In another numerical example, we track a signal where the coefficients are not constants. 

Under certain standing assumptions, we present an algorithm to design a robust controller for boundary control system by employing the finite-dimensional controllers in \cite{PauPhan19}. To apply the finite-dimensional controllers design for boundary control systems, we need some checkable assumptions to obtain the stabilizability and detectability of the extended systems. The assumptions can be influenced by free choices of some parameters in the construction of the extended systems. 
The next step is to utilize the controller design for two particular partial differential equations, namely linear diffusion-convection-reaction equations and linear beam equations with Kelvin-Voigt damping. For the case of beam equations, we present two different extended systems which work well both in theoretical and numerical aspects. 

The numerical computation is another contribution of this paper. Actually there are several numerical schemes satisfying the approximation assumption \ref{ass-A1} below. We also use Finite Element Method (FEM) as in \cite{PauPhan19} to simulate the controlled solution. We will present two numerical examples: a 2D diffusion-reaction-convection equation and a 1D beam equation with Kelvin-Voigt damping. In both examples, by choosing a suitable family of test functions, we approximate all operators and construct the extension operators $E$ numerically (in case we do not know $E$ explicitly). Then our finite-dimensional controllers can be computed through matrix computations. Another advantage of Finite Element Method is that this method can deal with various types of multi-dimensional domains (see the example in Section \ref{sec-numex-para}). 

\medskip
The paper is organized as follows. 
In Section \ref{sec-RORP}, we construct extended system from boundary control system with two additional assumptions on abstract boundary control systems, propose a collection of assumptions on the system, formulate the robust output regulation problem, and recall the Galerkin approximation. 
In Section \ref{sec-DesignCon}, we present the algorithm to design the robust controller for boundary control system and clarify that the controller solves the robust output regulation problem in Theorem \ref{the-RORP}. 
A block diagram of the algorithm for robust output regulation of boundary control systems will be presented in Section \ref{subsec-Algo}. 
Section \ref{sec-para} deals with general parabolic PDE models. Section \ref{sec-beam} concentrates on beam equations with Kelvin-Voigt damping. Two numerical examples will follow in each section by using Finite Element method.  

\subsection*{Notation}
For a linear operator $A: X \to Y$ we denote by $D(A),~\cN(A),~\cR(A)$ the domain, kernel, and range of $A$, respectively. 
$\rho(A)$ denotes the resolvent set of operator $A$, $\sigma(A) = \C \setminus \rho(A)$ denotes the spectrum of operator $A$. 
The space of bounded linear operators from $X$ to $Y$ is denoted by $\cL(X,Y)$.

\section{Boundary control systems and Robust Output Regulation}
\label{sec-RORP}

\subsection{Boundary control system} \label{sec-BcSysExtSys}
We start with the abstract boundary control system
\begin{subequations} \label{eq-abs-bc}
\begin{align}
\dot{w}(t) &= \cA w(t), \qquad w(0) = w_0, \\
\cB w(t) &= u(t), \\
y(t) &= C_0 w(t).
\end{align}
\end{subequations}
with $\cA: D(\cA) \subset X_0 \to X_0$, $u(t) \in U \coloneqq \C^m$, $y(t) \in Y \coloneqq \C^p$ and the boundary operator $\cB: D(\cA) \subset X_0 \to X_0 $. 

\begin{ass} \label{ass-boundsys1}
There exist two operators $\Ad$ and $\Arc$ satisfying $D(\Ad) = D(\cA) \subseteq D(\Arc)$ and the decomposition $\cA = \Ad + \Arc$, and $\Arc$ is relatively bounded with respect to $\Ad$. 
\end{ass}
$\Arc$ is relatively bounded to $A_d$ if $D(\Ad) \subseteq  D(\Arc)$  and there are non-negative constants $\alpha$ and $\beta$ so that
\begin{align*}
\| \Arc x \|  \le \alpha \|x\| + \beta \| \Ad x \| \quad
\text{for all~~} x \in D(\Ad). 
\end{align*}

The notations $\Ad$ and $\Arc$ are motivated by linear parabolic equations where we usually choose $\Ad$ as the diffusion term and $\Arc$ as the reaction-convection term.
 We assume that the system \eqref{eq-abs-bc} is a ``boundary control system'' in the sense of \cite{Sal87,TucWei09}. 

\begin{definition}
The control system \eqref{eq-abs-bc} is \emph{a boundary control system} if the followings hold: 

a. The operator $A_0: D(A_0) \to X_0$ with $D(A_0) = D(\cA) \cap  \ker(\cB)$ and  $A_0 x = \cA x$ for $x \in D(A_0)$ is the infinitesimal generator of a strongly continuous semigroup on $X_0$. 

b. $\cR (\cB) = U$. 
\end{definition}

The condition (b) implies that there exists an operator $E \in \cL(U,X_0)$ such that $\cB E = I$. However, finding an arbitrary right inverse operator of $\cB$ is not easy especially in the cases of multi-dimensional PDEs. Thus we propose the following additional assumption to construct the operator $E$. 

\begin{ass} \label{ass-boundsys2}
There exists a constant $\eta \ge 0$ such that $\eta \in \rho(A_0)$ and $E \in \cL(U,X_0)$ such that $\cR(E) \subset D(\cA)$ and
\begin{subequations} 
\begin{align}
\Ad E u &= \eta E u, \label{ass-cond1} \\
\cB E u &= u, \label{ass-condBC}
\end{align}
\end{subequations} 
for all $u \in U$.
\end{ass}
Under Assumptions \ref{ass-boundsys1} and \ref{ass-boundsys2}, $\Arc E$ is a bounded linear operator since $U$ is finite-dimensional and $\| \Arc E u \| \le \alpha \| Eu \| + \beta  \| \Ad Eu \| \le (\alpha + \beta \eta ) \|E\| \|u\|_U $. 

\begin{rem}
Comparing with the definition 3.3.2 in \cite{CurZwa95}, the condition \eqref{ass-cond1} is new. For particular PDEs, the construction of extension $E$ based on \eqref{ass-cond1} and \eqref{ass-condBC} leads to solve an ODE or an elliptic PDE. We call $E$ as ``an extension'' since its role is to transfer the boundary control into the whole domain. Note that the operator $E$ depends on the choice of $\eta \ge 0$. 
The approach of constructing an extension operator $E$ as a solution of an abstract elliptic equation has also been used, e.g., in  \cite{Bad09,PhanRod18,Rod15,Sal87}, \cite[Section 5.2]{Sta05}, and \cite[Remark 10.1.5]{TucWei09}).
\end{rem}

\subsubsection*{Assumptions on the system} We next introduce two assumptions on the system.

\begin{enumerate}
\renewcommand{\theenumi}{{\sf I\arabic{enumi}}} 
\renewcommand{\labelenumi}{} 
\item $\bullet$~Assumption~\theenumi:\label{ass-I1} The pair $(A_0,\,E)$ is exponentially stabilizable. 

\item $\bullet$~Assumption~\theenumi:\label{ass-I2} There exists $L_0\in \Lin(\C,X_0)$ such that $A_0+L_0 C_0$ is exponentially stable and 
  for every $k \in \{ 1, \dots, q\}$ we have 
$P_L(iw_k) \neq 0$ where $P_L(\gl)=C_0R(\gl,A_0+L_0C_0)E$.
\end{enumerate}
\medskip
Let $V_0$ be a Hilbert space, densely and continuously imbedded in $X_0$. We denote the inner product on $X_0$ and $V_0$ with $\langle\cdot,\cdot\rangle_{X_0}$ and $\langle\cdot,\cdot\rangle_{V_0}$, respectively. Analogously denote by $\|\cdot\|_{X_0}$ and $\|\cdot\|_{V_0}$ the norms on $X_0$ and $V_0$. 

\subsubsection*{Assumptions on the sesquilinear form} We assume that operator $A_0$ corresponds with sesquilinear $\sigma_0$ by the formula below 
\begin{align*}
\langle -A_0 w_1,\, w_2 \rangle = \sigma_0 (w_1,w_2), \qquad \forall w_1, w_2 \in V_0
\end{align*}
where $D(A_0) = \{ w \in V_0 \mid \sigma_0(w, \cdot) \text{~~has an extension to~~} X_0 \} $. The sesquilinear form $\sigma_0: V_0 \times V_0 \to \C$ satisfies two assumptions
\begin{enumerate}
\renewcommand{\theenumi}{{\sf S\arabic{enumi}}} 
\renewcommand{\labelenumi}{} 
\item $\bullet$~Assumption~\theenumi (Boundedness):\label{ass-S1} There exists $c_1 > 0$ such that for $w_1,~w_2 \in V_0$ we have 
\begin{align*}
| \sigma_0(w_1, w_2)| \le c_1 \|w_1\|_{V_0} \|w_2\|_{V_0}.  
\end{align*}

\item $\bullet$~Assumption~\theenumi (Coercivity):\label{ass-S2} There exist $c_2 > 0$ and some real $\lambda_0 > 0$ such that for $w \in V_0$, we have 
\begin{align*}
\re \sigma_0 (w, w) + \lambda_0 \|w\|^2_{X_0}  \ge c_2 \|w\|^2_{V_0}. 
\end{align*}
\end{enumerate} 
Under these assumptions, $A_0 - \lambda_0 I$ generates an analytic semigroup on $X_0$ (see \cite{BanKun84}). 

\subsection{Construction of the extended system}

By defining a new variable $v(t) = w(t) - E u (t)
$, we rewrite the equation \eqref{eq-abs-bc} in a new form 
\begin{subequations} \label{eq-abs-Cauchy}
\begin{align}
\dot{v}(t) &= A_0 v(t) - E(\dot{u}(t) - \eta u(t) )  + \Arc E u(t), \\
v(0) &= v_0. 
\end{align}
\end{subequations}
Since $A_0$ is the infinitesimal generator of an analytic semigroup, and $E,~\Arc E$ are bounded linear operators, Theorem 3.1.3 in \cite{CurZwa95} implies that the equation \eqref{eq-abs-Cauchy} has a unique classical solution for $v_0 \in D(A_0)$ and $u \in C^2([0, \tau]; U)$ for all $\tau >0$. The concept of ``classical solution'' means that $v(t)$ and $\dot{v}(t)$ are elements of $C((0,\tau), X_0)$ for all $\tau >0$, $v(t) \in D (A_0)$ and $v(t)$ satisfies  
\eqref{eq-abs-Cauchy}.
 \medskip

Denoting $\kappa(t) =  \dot{u}(t) -  \eta u(t)$, we obtain the extended systems with the new state variable $x = (v, u)^\top = \left(w - E u, u \right)^\top \in X \coloneqq X_0 \times U$ and a new control input $\kappa(t)$  as follows 
\begin{align} \label{eq-sys-ext}
\dot{x}(t) = 
\pmat{A_0 & \Arc E \\ 0 & \eta I} x(t)+ \pmat{-E \\ I} \kappa(t), \qquad x(0)= \pmat{w(0)-Eu(0) \\ u(0)}.
\end{align}
The observation part can be rewritten with the new variable as follows 
\begin{align} \label{eq-obs-ext}
y(t) = C_0 w(t) = C_0 \left(v(t) + E u(t) \right) = \pmat{C_0 & C_0 E } x(t). 
\end{align}

The theorem below shows the relationship between the solutions of \eqref{eq-abs-bc}, \eqref{eq-abs-Cauchy}, and \eqref{eq-sys-ext}. Its proof is analogous to the proof in \cite[Theorem 3.3.4]{CurZwa95}.  

\begin{theorem}
\label{the-change-var}
Consider the boundary control system \eqref{eq-abs-bc} and the abstract Cauchy equation \eqref{eq-abs-Cauchy}. Assume that $u \in C^2([0, \tau]; U)$ for all $\tau > 0 $. Then, if $v_0 = w_0 - E u(0) \in D(A_0)$, the classical solutions of \eqref{eq-abs-bc} and \eqref{eq-abs-Cauchy} are related by  
\begin{align*}
v(t) = w(t) - E u (t). 
\end{align*}
Furthermore, the classical solution of \eqref{eq-abs-bc} is unique. \\
In addition, if $v_0 \in D(A_0)$, the extended system \eqref{eq-sys-ext} with  $(x_0)_1 = v_0$, $(x_0)_2 = u(0)$ has the unique classical solution $x(t) = (v(t), u(t))^\top$, where $v(t)$ is the unique classical solution of \eqref{eq-abs-Cauchy}. 
\end{theorem}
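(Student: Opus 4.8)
The plan is to establish the two-way equivalence via the change of variables $v = w - Eu$ and then transport existence and uniqueness from the abstract Cauchy problem, following the template of \cite[Theorem 3.3.4]{CurZwa95} but using the extra relation \eqref{ass-cond1}. First I would take a classical solution $w$ of \eqref{eq-abs-bc}, set $v(t) = w(t) - Eu(t)$, and differentiate. Substituting $\dot w = \cA w = (\Ad + \Arc)w$ from Assumption \ref{ass-boundsys1}, rewriting $w = v + Eu$, and using $\Ad Eu = \eta Eu$ from \eqref{ass-cond1}, the terms reorganize into $\dot v = A_0 v - E(\dot u - \eta u) + \Arc Eu$, which is exactly \eqref{eq-abs-Cauchy}. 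The crucial point is to check that $v(t) \in D(A_0) = D(\cA)\cap\ker(\cB)$, so that the replacement $\cA v = A_0 v$ is legitimate: membership in $D(\cA)$ holds because $\cR(E)\subset D(\cA)$, and $\cB v = \cB w - \cB Eu = u - u = 0$ by the boundary condition together with \eqref{ass-condBC}. The reverse implication is the same computation run backwards: starting from a classical solution $v$ of \eqref{eq-abs-Cauchy}, set $w = v + Eu$ and verify $\dot w = \cA w$ (using $\cA Eu = \eta Eu + \Arc Eu$) and $\cB w = u$.

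Next I would handle uniqueness for \eqref{eq-abs-bc}. Since $A_0$ generates an analytic semigroup and both $E$ and $\Arc E$ are bounded (as noted after Assumption \ref{ass-boundsys2}), and since $u \in C^2$ makes the inhomogeneity $-E(\dot u - \eta u) + \Arc Eu$ continuously differentiable, Theorem 3.1.3 in \cite{CurZwa95} guarantees that \eqref{eq-abs-Cauchy} has a unique classical solution for each $v_0 \in D(A_0)$. If $w_1$ and $w_2$ were two classical solutions of \eqref{eq-abs-bc} with the same data, then $v_i = w_i - Eu$ would both solve \eqref{eq-abs-Cauchy} with the identical initial value $v_0 = w_0 - Eu(0)$; uniqueness for the Cauchy problem forces $v_1 = v_2$, hence $w_1 = w_2$.

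Finally, for the extended system \eqref{eq-sys-ext} I would verify by direct substitution that $x = (v,u)^\top$ solves it: the first block component reproduces \eqref{eq-abs-Cauchy} with $\kappa = \dot u - \eta u$, while the second block reduces to $\dot u = \eta u + \kappa$, which holds by the very definition of $\kappa$. To promote this to a uniqueness statement, I would observe that the block operator $\pmat{A_0 & \Arc E \\ 0 & \eta I}$ on $D(A_0)\times U$ is a bounded perturbation of $\diag(A_0,\eta I)$ and therefore generates a $C_0$-semigroup, so that Theorem 3.1.3 in \cite{CurZwa95} again yields a unique classical solution of \eqref{eq-sys-ext} for the given data; since $(v,u)^\top$ is such a solution, it must be the one.

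The step I expect to require the most care is the domain and regularity bookkeeping in the first paragraph: confirming that $v(t)\in D(A_0)$ for \emph{all} $t$ (not merely at $t=0$), so that the abstract manipulations with $A_0$ are justified throughout, and checking that the hypothesis $u \in C^2$ is precisely what places the inhomogeneity in $C^1$, which is the regularity under which the cited existence theorem delivers classical rather than merely mild solutions.
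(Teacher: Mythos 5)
Your proposal is correct and follows exactly the route the paper intends: the paper's proof consists of the single remark that the argument is analogous to \cite[Theorem 3.3.4]{CurZwa95}, and what you have written is precisely that argument adapted to the present setting, with the change of variables $v = w - Eu$, the domain check $v(t) \in D(\cA)\cap\ker(\cB)$ via $\cR(E)\subset D(\cA)$ and \eqref{ass-condBC}, the substitution $\Ad Eu = \eta Eu$ from \eqref{ass-cond1}, and uniqueness transported from \cite[Theorem 3.1.3]{CurZwa95} (where $u\in C^2$ gives the $C^1$ inhomogeneity), exactly as the paper sets up in the paragraph preceding the theorem. The verification of the extended system \eqref{eq-sys-ext} by block substitution and bounded perturbation of $\diag(A_0,\eta I)$ likewise matches the intended reasoning.
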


\subsection{The Robust Output Regulation Problem}
We write the system \eqref{eq-sys-ext}-\eqref{eq-obs-ext} in an abstract form on a Hilbert space $X = X_0 \times U$.  
\begin{subequations}
\begin{align*}
\dot{x}(t) &= A x (t) + B \kappa(t), \\
y (t) &= C x(t)
\end{align*} 
\end{subequations}
where 
\begin{align} \label{eq-extABC}
A = \pmat{A_0 & \Arc E \\ 0 & \eta I}, \quad 
B = \pmat{-E \\ I}, \quad C = \pmat{C_0 & C_0 E}. 
\end{align}
Note that $B$ and $C$ are bounded operators. 
\medskip

We consider the design of internal model based error feedback controllers of the form on $Z = \C^{s}$ 
\begin{align*}
\dot{z}(t) &= \cG_1 z(t) + \cG_2 e(t), \quad z(0) = z_0 \in Z, \\
\kappa (t) &= K z(t), 
\end{align*}
where $e(t) = y(t) - \yref(t)$ is the regulation error, $\cG_1 \in \C^{s \times s}$, $\cG_2 \in \C^{s \times p}$, and $K \in \C^{m \times s}$. Letting $x_e (t) = (x(t), z(t))^\top$, the system and the controller can be written together as a closed-loop system on the Hilbert space $X_e = X \times Z$
\begin{align*}
\dot{x}_e (t) &= A_e x_e(t) + B_e \yref(t), \quad x_e (0) = x_{e0} \\
e(t) &= C_e x_e(t) + D_e \yref(t) 
\end{align*}
where $x_{e0} = (x_0, z_0)^\top$ and 
\begin{align*}
A_e  = \pmat{A & BK \\ \cG_2 C & \cG_1}, \quad B_e = \pmat{0 \\ -\cG_2}, \quad C_e = \pmat{C & 0}, \quad D_e = -I.   
\end{align*} 
The operator $A_e$ generates a strongly continuous semigroup $T_e(t)$ on $X_e$. 

\subsubsection*{The Robust Output Regulation Problem}
The matrices $(\cG_1, \cG_2, K)$ are to be chosen so that the conditions below are satisfied.  

(a) The semigroup $T_e(t)$ is exponentially stable. 

(b) There exists $M_e,~w_e >0$ such that for all initial states $x_0 \in X$ and $z_0 \in Z$ and for all signal $\yref(t)$ of the form \eqref{eq-refsig} we have 
\begin{align}
\label{eq-yerr}
\| y(t) - \yref(t) \| \le M_e e^{-w_e t} (\| x_{e0} \| + \| \Lambda \|).  
\end{align}
where $\Lambda$ is a vector containing the coefficients of the polynomials $\{a_k (t) \}_{k} $ and $\{b_k (t) \}_{k} $ in \eqref{eq-refsig}. 

(c) When $(A, B, C)$ are perturbed to $(\tilde{A}, \tilde{B}, \tilde{C})$ in such a way that the perturbed closed-loop system remains exponentially stable, then for all $x_0 \in X$ and $z_0 \in Z$ and for all signals $\yref(t)$ of the form \eqref{eq-refsig} the regulation error satisfies \eqref{eq-yerr} for some modified constants $\tilde{M}_e, \tilde{w}_e >0$. 

\subsection{Galerkin approximation}
\label{sec-GalApp}
Let $V_0^N \subset V_0$ be a sequence of finite-dimensional subspaces. We define $A^N_0:~V_0^N \to V_0^N$ by 
\begin{align*} 
\langle -A^N_0 v_1, v_2  \rangle = \sigma_0(v_1, v_2) \qquad \text{for all}\quad v_1,v_2 \in V^N_0, 
\end{align*}
that is, $A^N_0$ is defined via restriction of $\sigma_0$ to $V^N_0 \times V^N_0$. 
Assume that operator $\Arc$ corresponds with sesquilinear $\sigma_{\mathrm{rc}}$ by the formula  
\begin{align*}
\langle -\Arc w_1,\, w_2 \rangle = \sigma_{\mathrm{rc}} (w_1,w_2), \qquad \forall w_1, w_2 \in V_0
\end{align*}
where $D(\Arc) = \{ w \in V_0 \mid \sigma_{\mathrm{rc}}(w, \cdot) \text{~has an extension to~} X_0 \} $. We define  $\Arc^N:~V_0^N \to V_0^N$ by 
\begin{align*} 
\langle -\Arc^N v_1, v_2  \rangle = \sigma_{\mathrm{rc}}(v_1, v_2) \qquad \text{for all}\quad v_1,v_2 \in V^N_0, 
\end{align*}

\medskip

For a given $E \in \cL (U, X_0)$, we define $E^N \in \cL(U, V^N_0 )$ by 
\begin{align*} 
\langle E^N \kappa, v_2  \rangle = \langle \kappa, E^* v_2 \rangle_{X_0}  \qquad \text{for all~~} \quad v_2 \in V^N_0, 
\end{align*}
and $C_0^N \in \cL(V^N_0, Y)$ denotes the restriction of $C_0$ onto $V^N_0$. 
\medskip

Let $P^N$ denote the usual orthogonal projection of $X_0$ into $V^N_0$, i.e., for $v_1 \in V_0$
\begin{align*}
P^N v_1 \in V^N_0 \text{~~and~~} \langle P^N v_1, v_2 \rangle = \langle v_1, v_2 \rangle_{X_0}  \text{~~for all~~} v_2 \in V^N_0. 
\end{align*}
We assume an approximation assumption as follows 
 \begin{enumerate}
\renewcommand{\theenumi}{{\sf A\arabic{enumi}}} 
\renewcommand{\labelenumi}{} 

\item $\bullet$~Assumption~\theenumi:\label{ass-A1} For any $v \in V_0$, there exists a sequence $v^N \in V^N_0$ such that $\|v^N - v\|_{V_0} \to 0$ as $N \to \infty$. 
\end{enumerate}

\section{Reduced order finite-dimensional controllers}
\subsection{The controller}
\label{sec-DesignCon}

In this section, we recall a finite-dimensional controller design, namely ``Observer-based finite dimensional controller'' presented in \cite[Section III.A]{PauPhan19} to design robust controller for boundary control system \eqref{eq-abs-bc}. Another controller, namely ``Dual observer-based finite dimensional controller'' presented in \cite[Section III.B]{PauPhan19} can be applied analogously. 

\medskip
The finite-dimensional robust controller is based on an internal model with a reduced order observer of the original system and has the form
\begin{subequations}
  \label{eq:FinConObs}
  \eqn{
    \dot{z}_1(t)&= G_1z_1(t) + G_2 e(t)\\
    \dot{z}_2(t)&= (A_L^r+B_L^rK_2^r)z_2(t) + B_L^r K_1^N z_1(t) -L^r e(t)\\
    u(t)&= K_1^N z_1(t) + K_2^rz_2(t) 
  }
\end{subequations}
  with state $(z_1(t),z_2(t))\in Z:= Z_0\times \C^r$. 
All matrices 
$(G_1,G_2,A_L^r,B_L^r,K_1^N,K_2^r,L^r)$ are chosen based on the four-step algorithm given below.
The matrices $G_1,G_2$ are \keyterm{the internal model} in the controller. 
The remaining matrices $A_L^r,~B^r_L,~L^r,~K_1^N,~K_2^r$ are computed based on the Galerkin approximation $(A_0^N, \Arc^N, E^N, C_0^N)$ and model reduction of this approximation.

\medskip

\noindent \textbf{Step C1. The Internal Model:} \\
We choose $Z_0=Y^{n_0}\times Y^{2n_1} \times \ldots \times Y^{2n_q}$, $G_1 = \diag(J_0^Y, \ldots, J^Y_q)\in \cL(Z_0)$, and 
  $G_2=(G_2^k)_{k=0}^q \in \cL(Y,Z_0)$. 
  The components of $G_1$ and $G_2$ are chosen as follows.
  For $k=0$ we let
\begin{align*}
J^Y_0 = \pmat{
0_p  & I_p  	&  			&  \\
      	&  0_p & \ddots	&  \\
      	&			& \ddots	& I_p    \\
      	&			& 			& 0_p
} 
, \qquad 
G_2^0 = \pmat{0_p\\\vdots\\0_p\\I_p}
\end{align*}
where $0_p $ and $I_p$ are the $p\times p$ zero and identity matrices, respectively. 

 For $k \in \{ 1, \ldots, q \}$ we choose 
\begin{align*}
J^Y_k = \pmat{
  \Omega_k   & I_{2p}  &  &  \\
  &  \Omega_k  & \ddots&  \\
  && \ddots& I_{2p}    \\
  && & \Omega_k 
}
, \qquad 
G_2^k = \pmat{0_{2p}\\\vdots\\0_{2p}\\I_p\\0_p}
\end{align*}
where $\Omega_k = \pmatsmall{0_p&\gw_k I_p\\-\gw_k I_p&0_p}$.

\medskip

\noindent \textbf{Step C2. The Galerkin Approximation:}

For a fixed and sufficiently large $N\in\N$ we apply the Galerkin approximation described in Section~\ref{sec-GalApp} in $V_0$ to operators $(A_0, \Arc, E, C_0)$ to get their corresponding approximations $(A_0^N, \Arc^N, E^N, C_0^N)$. Then we compute the matrices $(A^N, B^N, C^N)$ as follows
\begin{align*}
A^N = \pmat{A_0^N & \Arc^N E^N \\ 0 & \eta I}, \quad
B^N =\pmat{-E^N \\ I}, \quad 
C^N = \pmat{C_0^N & C_0^N E^N}.
\end{align*}

\medskip
\noindent \textbf{Step C3. Stabilization:} \\
Denote the approximation $V^N \coloneqq V_0^N \times U$ of the space $V = V_0 \times U$.
Let $\ga_1,\ga_2\geq 0$.
Let $Q_1 \in \Lin(X,Y_0)$ and $Q_2 \in \Lin(U_0,X)$ with $U_0,Y_0$ Hilbert be such that $(A + \ga_2 I,Q_2)$ is exponentially stabilizable and $(Q_1,A+\ga_1 I)$ is exponentially detectable. 
 Let $Q_1^N$ and $Q_2^N$ be the approximations of $Q_1$ and $Q_2$, respectively. 
Let $Q_0\in \Lin(Z_0,\C^{p_0})$ be such that $(Q_0,G_1)$ is observable, and $R_1\in \Lin(Y)$ and $R_2\in \Lin(U)$ be such that $R_1>0$ and $R_2>0$.
We then define the matrices $(A^N_c, B^N_c,\,C^N_c)$ as follows
\eq{
  A^N_c = \pmat{G_1&G_2C^N\\0&A^N}, \quad B^N_c=\pmat{0\\B^N}, \quad C^N_c = \pmat{Q_0&0\\0&Q_1^N}.
}
Define
$L^N =-\Sigma_N C^N R_1\inv\in \cL(Y, V^N) $ 
and 
$K^N := \pmat{K_1^N,\; K_2^N} =-R_2\inv (B^N_c)^\ast\Pi_N \in \cL(Z_0\times V^N ,U )$ 
where
$\Sigma_N$
and
$\Pi_N$
are the non-negative solutions of finite-dimensional Riccati equations
\eq{
  (A^N + \ga_1 I) \Sigma_N + \Sigma_N (A^N + \ga_1 I)^*  - \Sigma_N \left(C^N \right)^* R_1\inv C^N \Sigma_N &=- Q_2^N (Q_2^N)^*, \\
  (A^N_c + \ga_2 I)^* \Pi_N + \Pi_N (A^N_c  +\ga_2 I) - \Pi_N B^N_c R_2\inv\left(B^N_c\right)^\ast \Pi_N &=- 
  \left(C^N_c\right)^\ast C^N_c.  
}

\medskip

\noindent \textbf{Step C4. The Model Reduction:}\\
For a fixed and suitably large $r\in\N$, $r\leq N$, by using the Balanced Truncation method to the stable finite-dimensional system
  \eq{
    (A^N+L^NC^N, [ B^N,\;L^N],K^N_2),
 } we obtain a stable $r$-dimensional reduced order system
\eq{
  \left(A_L^r,[B_L^r ,\; L^r] ,K_2^r\right).}

\medskip

The next theorem claims that the controller above solves Robust Output Regulation Problem for the boundary control systems \eqref{eq-abs-bc}. In Section \ref{sec-StabDetExtSys}, we present sufficient conditions for the stabilizablity and detectability of the extended system $(A,\,B,\, C)$ . 

\begin{theorem} \label{the-RORP}
Let assumptions \ref{ass-S1}, \ref{ass-S2}, \ref{ass-I1}, \ref{ass-I2}, and \ref{ass-A1} be satisfied. Assume that the extended system $(A,\,B,\, C)$ in \eqref{eq-extABC} is stabilizable and detectable. 
 The finite dimensional controller \eqref{eq:FinConObs} solves the Robust Output Regulation Problem provided that the order $N$ of the Galerkin approximation and the order $r$ of the model reduction are sufficiently high. 

If $\alpha_1, \alpha_2 >0$, the controller achieves a uniform stability margin in the sense that for any fixed $0 < \alpha < \min \{\alpha_1, \alpha_2\} $ the operator $A_e + \alpha I$ will generate an exponentially stable semigroup if $N$ and $r \le N $ are sufficiently large.   
\end{theorem}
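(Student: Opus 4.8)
The plan is to reduce the assertion to the abstract robust-controller theorem of \cite{PauPhan19} by confirming that the extended system $(A,B,C)$ of \eqref{eq-extABC} satisfies every hypothesis imposed there, and then to transfer the resulting regulation conclusion back to the boundary control system \eqref{eq-abs-bc} through the change of variables in Theorem~\ref{the-change-var}. The entire point of the extended-system construction is that $B$ and $C$ are bounded (as noted after \eqref{eq-extABC}), so the unboundedness of the boundary input no longer obstructs the application of \cite{PauPhan19}.

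First I would establish that $A$ generates an analytic semigroup on $X = X_0\times U$. Under Assumptions~\ref{ass-S1} and~\ref{ass-S2} the operator $A_0$ generates an analytic semigroup; since $A$ is block upper triangular with the bounded coupling $\Arc E$ (bounded by the estimate recorded after Assumption~\ref{ass-boundsys2}) and the finite-dimensional block $\eta I$, it is a bounded perturbation of a block-diagonal analytic generator and hence itself generates an analytic semigroup. This, together with the boundedness of $B$ and $C$, places the extended system squarely in the class treated in \cite{PauPhan19}.

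Next I would verify the control-theoretic hypotheses. Stabilizability and detectability of $(A,B,C)$ are assumed outright. The pair $(A_0,E)$ from Assumption~\ref{ass-I1} and the output injection $L_0$ from Assumption~\ref{ass-I2} supply the stabilizing and observer ingredients of the design, while the internal model $(G_1,G_2)$ of Step~C1 encodes exactly the frequencies $\{w_k\}_{k=0}^q$ of \eqref{eq-refsig} with the prescribed Jordan block sizes $n_k$. The non-resonance condition $P_L(iw_k)\neq 0$ of Assumption~\ref{ass-I2} is precisely the solvability condition ensuring that the augmented pair $(A^N_c,B^N_c)$ remains stabilizable through the internal model, so that the regulator equations are solvable and, by the internal model principle, asymptotic tracking is achieved. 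With these hypotheses in force, the convergence theory of \cite{PauPhan19} applies: under Assumption~\ref{ass-A1} the Galerkin approximations $(A_0^N,\Arc^N,E^N,C_0^N)$ converge, the finite-dimensional Riccati solutions $\Sigma_N,\Pi_N$ of Step~C3 converge to their infinite-dimensional counterparts, and the balanced truncation of Step~C4 is accurate for $r$ large. The theorem of \cite{PauPhan19} then yields, for $N$ and $r$ sufficiently large, exponential stability of $A_e$ (condition (a)), the error estimate \eqref{eq-yerr} (condition (b)), and its persistence under admissible perturbations (condition (c)). Finally, because $y(t)=Cx(t)=C_0 w(t)$ is the same signal for the extended and boundary systems by \eqref{eq-obs-ext} and Theorem~\ref{the-change-var}, regulation of the extended output is regulation of the original output, completing all three conditions.

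For the uniform stability margin when $\alpha_1,\alpha_2>0$, I would exploit that the shifts enter the Riccati equations of Step~C3 through $A^N+\alpha_1 I$ and $A^N_c+\alpha_2 I$: solving the shifted equations amounts to stabilizing the shifted systems, forcing the spectrum of the closed-loop generator strictly to the left of $-\min\{\alpha_1,\alpha_2\}$ up to approximation error. Taking $N$ and $r\le N$ large enough absorbs this error, so that $A_e+\alpha I$ generates an exponentially stable semigroup for every fixed $0<\alpha<\min\{\alpha_1,\alpha_2\}$. The hard part will be this final convergence-and-robustness step --- guaranteeing that finite-dimensional stabilization of the Galerkin-plus-truncation approximation actually stabilizes the infinite-dimensional closed loop, which is exactly where ``$N$ and $r$ sufficiently large'' is indispensable and where analyticity, the form convergence of Assumption~\ref{ass-A1}, and the convergence of the Riccati solutions must be combined; fortunately this machinery is provided by \cite{PauPhan19}, so the present task reduces to confirming its hypotheses rather than rebuilding the approximation theory.
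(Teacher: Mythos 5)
Your overall strategy --- reduce the theorem to Theorem III.2 of \cite{PauPhan19} by checking its hypotheses for the extended system $(A,B,C)$ and the controller of Steps C1--C4 --- is the same as the paper's, but at the decisive point you check the wrong hypothesis, and this leaves a genuine gap. The design and convergence machinery of \cite{PauPhan19} is not stated for ``an analytic generator plus bounded $B$ and $C$''; it is a variational Galerkin framework whose approximation results (convergence of $(A^N,B^N,C^N)$, of the Riccati solutions $\Sigma_N,\Pi_N$, and accuracy of the balanced truncation) are formulated for systems defined by a sesquilinear form that is bounded and coercive on a space $V$ densely embedded in $X$ (the analogues of Assumptions \ref{ass-S1} and \ref{ass-S2}), together with the approximation property \ref{ass-A1} for the subspaces $V^N$. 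Your proposal verifies none of this for the \emph{extended} system: proving that $A$ generates an analytic semigroup by bounded perturbation of a block-diagonal generator does not supply the form structure on which that convergence theory rests, so the sentence ``the convergence theory of \cite{PauPhan19} applies'' is precisely the step you are not yet entitled to take.

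What is needed --- and what constitutes the substantive content of the paper's proof --- is to define on $V=V_0\times U$, $X=X_0\times U$ the extended form
\begin{align*}
\sigma\bigl((v_1,u_1),(v_2,u_2)\bigr)=\sigma_0(v_1,v_2)-\langle \Arc E u_1,v_2\rangle_{X_0}-\eta\, u_1 u_2^\top ,
\end{align*}
and verify that it inherits \ref{ass-S1} and \ref{ass-S2} from $\sigma_0$. Boundedness is immediate because $\Arc E$ is bounded (the estimate recorded after Assumption \ref{ass-boundsys2}); coercivity requires absorbing the cross term $\langle \Arc E u, v\rangle_{X_0}$ by Young's inequality, which yields
\begin{align*}
\re\sigma(\phi,\phi)+\lambda_1\|\phi\|_X^2\ \ge\ c_2\|\phi\|_V^2,
\qquad
\lambda_1=\max\Bigl\{\lambda_0+\tfrac12,\ \tfrac12\|\Arc E\|^2_{\cL(U,X_0)}+\eta+c_2\Bigr\},
\end{align*}
so the extended system satisfies the form hypotheses with a shifted constant. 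One must also record (trivial but required) that \ref{ass-A1} passes to the product spaces $V^N=V_0^N\times U$ by taking $x^N=(v^N,u)$ for $x=(v,u)$. With these two verifications, plus the assumed stabilizability and detectability of $(A,B,C)$, Theorem III.2 of \cite{PauPhan19} applies verbatim and delivers all of the claims at once, including the uniform stability margin statement for $\alpha_1,\alpha_2>0$; your closing argument about the shifted Riccati equations is then unnecessary, since that conclusion is part of the cited theorem rather than something to be re-derived.
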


\begin{proof}
The proof of this theorem is an application of Theorem III.2 in \cite{PauPhan19} under three checkable statements. 

\textbf{Step 1. ``Stabilizability and Detectability''} \\
 Recall the abstract system 
\begin{align*}
\dot{x}(t) &= A x (t) + B \kappa(t), \\
y (t) &= C x(t). 
\end{align*} 
We assume that the extended system $(A, B, C)$ is stabilizable and detectable. 
The sufficient conditions to guarantee the stabilizability and detectability of $(A, B, C)$ will be presented in Section \ref{sec-StabDetExtSys}. 

\medskip
\textbf{Step 2. ``Boundedness and Coercivity of the sesquilinear form''} \\
Define $V = V_0 \times U$ and $X = X_0 \times U$, the sesquilinear form $\sigma$ is defined by 
\begin{align} \label{sesqui-para}
\sigma (\phi_1, \phi_2) = \sigma \left((v_1, u_1), (v_2, u_2)  \right) =  \sigma_0 (v_1,v_2) - \langle \Arc E u_1, v_2 \rangle_{X_0} - \eta u_1 u_2^\top.  
\end{align}
For $\phi =(v,u)^\top \in V $, we define $\|\phi\|_X^2 = \|v\|_{X_0}^2 + \|u\|_{U}^2$ and $\|\phi\|_V^2 = \|v\|_{V_0}^2 + \|u\|_{U}^2$.

Since $\sigma_0$ satisfies two assumptions \ref{ass-S1} and \ref{ass-S2}, there exist constants $c_1 >0$, $c_2 >0$ and $\lambda_0 > 0$ such that for $v_1, v_2, \text{~and~} v \in V_0$ we have
\begin{align*}
|\sigma_0(v_1,v_2)| &\le c_1 \|v_1\|_{V_0} \|v_2\|_{V_0}, \\
\re \sigma_0 (v, v) + \lambda_0 \|v\|^2_{X_0} &\ge c_2 \|v\|^2_{V_0}. 
\end{align*} 
To check the boundedness of $\sigma(\phi_1, \phi_2)$, we have 
\begin{align*}
|\sigma(\phi_1, \phi_2)| &\le |\sigma_0(v_1, v_2)| + |\langle \Arc E u_1, v_2 \rangle_{X_0}| +  \eta  u_1 u_2^\top \\
&\le \left(c_1 + k  \|\Arc E\|_{\cL(U,X_0)} +  \eta \right)  \|\phi_1\|_{V} \|\phi_2\|_{V}. 
\end{align*} 
Regarding the coercivity of $\sigma(\phi, \phi)$, let $\phi = (v, u)$, we have 
\begin{align*}
\re \sigma(\phi, \phi) &= \re \sigma_0 (v,v) - \re \langle \Arc E u, v \rangle_{X_0} -  \eta  \|u\|^2_U \\
&\ge c_2 \left( \|v\|_{V_0}^2 + \|u\|^2_U \right) - \left( \lambda_0 + \frac{1}{2} \right) \|v\|^2_{X_0} \\
&\qquad - \left( \frac{1}{2}\|\Arc E\|_{\cL(U,X_0)}^2 + \eta + c_2  \right)\|u\|^2_U.  
\end{align*}
Define $\lambda_1 = \max \left\{ \lambda_0 + \frac{1}{2}, \frac{1}{2}\|\Arc E\|_{\cL(U,X_0)}^2 + \eta + c_2   \right\}$, we finally obtain $\re \sigma(\phi, \phi) + \lambda_1 \|\phi\|^2_X \ge c_2 \|\phi\|^2_V $. 

In conclusion the sesquilinear form $\sigma$ satisfies two assumptions \ref{ass-S1} and \ref{ass-S2} in the suitable spaces $X$ and $V$.  

\medskip
\textbf{Step 3. ``Approximation assumption''} \\
 Denote analogously $V^n = V_0^n \times U$. Under assumption \ref{ass-A1}, for any $v \in V_0$, there exists a sequence $v^n \in V_0^n$ such that $\|v^n - v\|_{V_0} \to 0$ as $n \to \infty$. Then for $x = (v, u) \in V$, define the sequence $x^n = (v^n, u) \in V^n$ satisfying $\|x^n - x\|_V \to 0$ as $n \to \infty$.  
\end{proof}

\subsection{Stabilizability and detectability of the extended systems}
\label{sec-StabDetExtSys}
In this section, we use three Theorems 5.2.6, 5.2.7, and  5.2.11 in \cite{CurZwa95}. We introduce new notations as follows. The spectrum of $A_0$ is decomposed into two distinct parts of the complex plane
\begin{align*}
\sigma^+(A_0) &= \sigma(A_0)\cap \overline{\C_0^+}, \quad \C_0^+  = \{ \lambda \in \C | \re \lambda  > 0 \}, \\
\sigma^-(A_0) &=  \sigma(A_0)\cap \C_0^-, \quad \C_0^- = \{ \lambda \in \C | \re \lambda  < 0 \}.
\end{align*}
Under the detectability of $(A_0, C_0)$ or the stabilizability of $(A_0, E)$, Theorems 5.2.6 or 5.2.7 in \cite{CurZwa95} guarantees that $A_0$ satisfies the spectrum decomposition assumption at 0. The decomposition of the spectrum induces a corresponding decomposition of the state space $X_0$, and of the operator $A$. We follow the definition of  $T_0^- (t)$ as in \cite[Equation 5.33]{CurZwa95}.

\begin{lem} \label{lem-detec-extsys}
Assume that $(A_0, C_0)$ is exponentially detectable. 

(i.) If $\Arc E = 0$ and $C_0 E$ is injective, the extended system $(A,C)$ is also exponentially detectable. 

(ii.) If $\Arc E \neq 0$, assume further that 
\begin{align} \label{eq-detec-cond}
\ker(\eta I - A)~\cap~\ker(C) = \{0\}
\end{align} then the extended system $(A, C)$ is exponentially detectable.  
\end{lem}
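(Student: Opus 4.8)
The plan is to reduce exponential detectability of the infinite-dimensional pair $(A,C)$ to a finite-dimensional Hautus test on the unstable part of $A$, which is exactly the reduction permitted by Theorems 5.2.6, 5.2.7 and 5.2.11 of \cite{CurZwa95}. I would first fix the spectral picture of $A$. Since $\Arc E$ is bounded, $A$ is a bounded perturbation of the block-diagonal analytic generator $\pmatsmall{A_0 & 0\\ 0 & \eta I}$ and hence itself generates an analytic semigroup. Because $A$ is block upper triangular with lower-right block $\eta I$ on the finite-dimensional space $U$, solving $(\lambda I - A)(x,u)^\top = (f,g)^\top$ shows $\lambda\in\rho(A)$ precisely when $\lambda\in\rho(A_0)$ and $\lambda\neq\eta$; hence $\sigma(A)=\sigma(A_0)\cup\{\eta\}$. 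As $(A_0,C_0)$ is exponentially detectable, $A_0$ satisfies the spectrum decomposition assumption at $0$ with finite unstable spectrum $\sigma^+(A_0)$ and the Hautus condition $\ker(\lambda I - A_0)\cap\ker(C_0)=\{0\}$ holds for all $\lambda\in\overline{\C_0^+}$ (Theorems 5.2.6/5.2.7 of \cite{CurZwa95}). Since $\eta\ge 0$ lies in $\overline{\C_0^+}$ while $\eta\in\rho(A_0)$, the unstable spectrum of $A$ is the finite set $\sigma^+(A)=\sigma^+(A_0)\cup\{\eta\}$, separated from $\sigma^-(A)=\sigma^-(A_0)$; thus $A$ too satisfies the spectrum decomposition assumption at $0$ with finite-dimensional unstable subspace, and its stable part is exponentially stable since the semigroup is analytic and $\sigma^-(A)$ is bounded away from the imaginary axis.

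By Theorem 5.2.11 of \cite{CurZwa95}, $(A,C)$ is then exponentially detectable if and only if $\ker(\lambda I - A)\cap\ker(C)=\{0\}$ for every $\lambda\in\sigma^+(A)$, so it remains to verify this Hautus condition at each unstable eigenvalue. Suppose $(x,u)^\top$ satisfies $(\lambda I - A)(x,u)^\top=0$ and $C(x,u)^\top = C_0 x + C_0 E u = 0$. The lower block of the eigenvalue equation reads $(\lambda-\eta)u=0$. For $\lambda\in\sigma^+(A_0)$ we have $\lambda\neq\eta$ (because $\eta\in\rho(A_0)$), so $u=0$; the upper block then gives $(\lambda I - A_0)x=0$ together with $C_0 x=0$, whence $x=0$ by detectability of $(A_0,C_0)$. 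Only the point $\lambda=\eta$ remains.

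At $\lambda=\eta$ the two cases split. In case (ii) the required identity $\ker(\eta I - A)\cap\ker(C)=\{0\}$ is exactly the standing hypothesis \eqref{eq-detec-cond}, so the Hautus test holds there and detectability follows. In case (i), where $\Arc E=0$, the upper block of $(\eta I - A)(x,u)^\top=0$ reduces to $(\eta I - A_0)x=0$, giving $x=0$ because $\eta\in\rho(A_0)$; then $C(0,u)^\top = C_0 E u = 0$ forces $u=0$ by injectivity of $C_0 E$, so \eqref{eq-detec-cond} holds automatically. In both cases the Hautus condition is satisfied at every point of $\sigma^+(A)$, and exponential detectability of $(A,C)$ follows.

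I expect the only genuinely delicate step to be the verification that $A$ inherits the spectrum decomposition assumption at $0$ from $A_0$: one must confirm that adjoining the finite-dimensional block $\eta I$ and the bounded coupling $\Arc E$ keeps the unstable spectrum finite and spectrally separated from the stable part, so that the finite-dimensional criterion of Theorem 5.2.11 is applicable. Once the identification $\sigma^+(A)=\sigma^+(A_0)\cup\{\eta\}$ and the spectral gap are in place, the remaining eigenvector computations are routine.
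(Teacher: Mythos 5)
Your proof is correct and follows essentially the same route as the paper's: both arguments reduce exponential detectability of $(A,C)$ to the Hautus kernel test via Theorems 5.2.7 and 5.2.11 of \cite{CurZwa95}, and then run the identical case analysis ($s\neq\eta$ forcing $u=0$ and invoking detectability of $(A_0,C_0)$, versus $s=\eta$ handled by injectivity of $C_0E$ in case (i) and by hypothesis \eqref{eq-detec-cond} in case (ii)). The only difference is presentational: where the paper simply asserts that the extended operator $A$ satisfies the conditions of Theorem 5.2.11, you supply the supporting spectral computation $\sigma(A)=\sigma(A_0)\cup\{\eta\}$ together with the separation and stability of the stable part, which fills in that asserted step rather than taking a different route.
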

\begin{proof}
Since $(A_0, C_0)$ is exponentially detectable, Theorem 5.2.7 in \cite{CurZwa95} implies that $A_0$ satisfies the spectrum decomposition at 0, $T_0^- (t)$ is exponentially stable, and $\sigma^+(A_0)$ is finite. Then we can apply Theorem 5.2.11 in \cite{CurZwa95} for the detectable pair $(A_0, C_0)$ to obtain that 
\begin{align*}
\ker(sI - A_0) \cap \ker(C_0) = \{0\} \quad \text{for all~~} s \in \overline{\C}_0^+.  
\end{align*}
Under our choice $\eta \in \rho(A_0)$, the extended operator $A$ satisfies all conditions of Theorem 5.2.11. To prove the detectability of the extended system $(A,C)$, we will verify that 
\begin{align*}
\ker(sI - A) \cap \ker(C)= \{0\} \quad \text{for all~~} s \in \overline{\C}_0^+.  
\end{align*} 
Take $(v, u)^\top \in \ker(sI - A) \cap \ker(C)$, for any $s \in \overline{\C}_0^+$ we have
\begin{align} \label{eq-detec-gencon}
\begin{cases}
(sI - A_0) v - \Arc E u = 0, \\
(s - \eta) u = 0, \\
 C_0 v + C_0 E u = 0. 
\end{cases} 
\end{align}

(i.) If $\Arc E = 0$,  we rewrite the conditions \eqref{eq-detec-gencon} as $(sI - A_0) v = 0$, $(s - \eta) u = 0$, and $C_0 v + C_0 E u = 0$. 
For $s \in  \overline{\C}_0^+ \setminus  \eta$, we have that $u = 0$, $(sI - A_0)v = 0$, and $C_0 v = 0$. This implies that $v  \in \ker(sI - A_0) \cap \ker(C_0) = \{0\}$, and thus $v = 0$.\\
For $s = \eta \in \rho(A_0)$, we get that $v = 0$ and $C_0 E u = 0$. Under the condition that $C_0 E$ is injective, this implies that $u = 0$. 
 
Finally for all $s \in \overline{\C}_0^+$, we obtain that $\ker(sI - A) \cap \ker(C)= \{0\}$. It follows that the extended pair $(A, C)$ is exponentially detectable. 

(ii.) If $\Arc E \neq 0$, we first consider $s \in \overline{\C}_0^+ \setminus \eta$. Analogously as in the first case, we get that $u = 0$, $(sI - A_0)v = 0$, and $C_0 v = 0$. This implies that $v = 0$ due to the detectability of the pair $(A_0, C_0)$. 

In the case $s = \eta$, we rewrite the condition as 
$(\eta I - A_0) v - \Arc E u = 0$ and $ C_0 v + C_0 E u = 0$. Under the additional assumption \eqref{eq-detec-cond}, we get that $(v,u) = 0$. 

Since $\ker(sI - A) \cap \ker(C)= \{0\}$ for all $s \in \overline{\C}_0^+$, we conclude that the extended pair $(A, C)$ is exponentially detectable 
\end{proof}

\begin{lem} \label{lem-stab-extsys}
Assume that $(A_0, E)$ is exponentially stabilizable. 

(i.) If $\Arc E = 0$, the extended system $(A,B)$ is also exponentially stabilizable. 

(ii.) If $\Arc E \neq 0$, assume further that 
\begin{align} \label{eq-stab-cond}
\ker\left((sI - A_0)^\ast\right) \cap \ker \left(\left(\Arc E  +  (\eta - s)E \right)^\ast \right)  = \{0\} \qquad  \text{for~} s  \in \sigma^+(A_0),
\end{align}
the extended system $(A,B)$ is exponentially stabilizable. 
\end{lem}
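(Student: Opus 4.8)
The plan is to mirror the proof of Lemma~\ref{lem-detec-extsys}, exploiting the duality between exponential stabilizability of $(A,B)$ and exponential detectability of $(A^\ast,B^\ast)$. First I would invoke Theorem 5.2.6 in \cite{CurZwa95}: since $(A_0,E)$ is exponentially stabilizable, $A_0$ satisfies the spectrum decomposition assumption at $0$, the stable part $T_0^-(t)$ is exponentially stable, and $\sigma^+(A_0)$ is finite. The (dual of the) Hautus criterion of Theorem 5.2.11 in \cite{CurZwa95}, applied to the stabilizable pair $(A_0,E)$, then gives
\begin{align*}
\ker\bigl((sI-A_0)^\ast\bigr)\cap\ker(E^\ast)=\{0\}\qquad\text{for all } s\in\overline{\C}_0^+.
\end{align*}
Because $A$ is block upper triangular with $\sigma(A)=\sigma(A_0)\cup\{\eta\}$ and $\eta\in\rho(A_0)$ contributes only an isolated eigenvalue of finite multiplicity, the operator $A$ again satisfies the spectrum decomposition at $0$ with $\sigma^+(A)$ finite and stable part exponentially stable; the same holds for $A^\ast$. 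Hence Theorem 5.2.11 applies to $(A^\ast,B^\ast)$, and stabilizability of $(A,B)$ is equivalent to $\ker(sI-A^\ast)\cap\ker(B^\ast)=\{0\}$ for all $s\in\overline{\C}_0^+$.

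Next I would compute the adjoints explicitly. Using that $\eta$ is real,
\begin{align*}
A^\ast=\pmat{A_0^\ast & 0 \\ (\Arc E)^\ast & \eta I},\qquad B^\ast(v,u)=-E^\ast v+u,
\end{align*}
so a pair $(v,u)^\top\in\ker(sI-A^\ast)\cap\ker(B^\ast)$ satisfies $(sI-A_0^\ast)v=0$, $u=E^\ast v$, and $-(\Arc E)^\ast v+(s-\eta)u=0$. For every $s\in\overline{\C}_0^+$ with $\bar s\in\rho(A_0)$ — in particular for $s=\eta$, since $\eta\in\rho(A_0)$ is real — the first equation forces $v=0$, whence $u=E^\ast v=0$; here it is precisely the identity block of $B$ that removes the need for any extra hypothesis. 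It therefore remains to treat the finitely many parameters $s$ with $\bar s\in\sigma^+(A_0)$.

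For such $s$, set $\lambda=\bar s\in\sigma^+(A_0)$, so that $sI-A_0^\ast=(\lambda I-A_0)^\ast$. Eliminating $u$ via $u=E^\ast v$ turns the last equation into $\bigl[(s-\eta)E^\ast-(\Arc E)^\ast\bigr]v=0$; since $(s-\eta)E^\ast=\bigl((\lambda-\eta)E\bigr)^\ast$ and the kernel is unchanged under multiplication by a nonzero scalar, this reads $\bigl(\Arc E+(\eta-\lambda)E\bigr)^\ast v=0$. Combined with $(\lambda I-A_0)^\ast v=0$ this yields
\begin{align*}
v\in\ker\bigl((\lambda I-A_0)^\ast\bigr)\cap\ker\bigl((\Arc E+(\eta-\lambda)E)^\ast\bigr).
\end{align*}
In case (ii), the hypothesis \eqref{eq-stab-cond} evaluated at $\lambda$ forces $v=0$, hence $(v,u)=0$. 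In case (i), where $\Arc E=0$, the intersection collapses to $\ker((\lambda I-A_0)^\ast)\cap\ker(E^\ast)$ — because $\lambda\neq\eta$ on $\sigma^+(A_0)$ makes the scalar factor harmless — which is trivial by the first step, so no additional condition is required. In all cases $\ker(sI-A^\ast)\cap\ker(B^\ast)=\{0\}$ on $\overline{\C}_0^+$, and $(A,B)$ is exponentially stabilizable.

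I expect the main obstacle to be bookkeeping rather than conceptual: keeping the complex conjugation of the spectral parameter consistent when passing between $A$ and $A^\ast$, so that the hypothesis naturally indexed by $s\in\sigma^+(A_0)$ matches the Hautus points of $A^\ast$ after the substitution $\lambda=\bar s$; and carefully verifying that the spectrum-decomposition hypotheses needed for Theorem 5.2.11 genuinely transfer from $A_0$ to the extended operators $A$ and $A^\ast$ despite the coupling term $\Arc E$ and the extra eigenvalue $\eta\in\overline{\C}_0^+$.
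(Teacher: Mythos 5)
Your proposal is correct and follows essentially the same route as the paper's proof: duality between stabilizability of $(A,B)$ and the Hautus-type kernel condition for $(A^\ast,B^\ast)$ via Theorems 5.2.6 and 5.2.11 of Curtain--Zwart, the explicit block computation of $A^\ast$ and $B^\ast$, and the case split between $\bar s\in\rho(A_0)$ and $\bar s\in\sigma^+(A_0)$ with the same conjugation bookkeeping $(\eta-s)E^\ast=\bigl((\eta-\bar s)E\bigr)^\ast$ that makes hypothesis \eqref{eq-stab-cond} apply. The only cosmetic difference is that you treat cases (i) and (ii) uniformly and recover (i) as a specialization, whereas the paper handles them separately.
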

\begin{proof}
The pair $(A_0, E)$ is exponentially stabilizable if and only if $(A_0^\ast, E^\ast)$ is exponentially detectable. Analogously as in Lemma \ref{lem-detec-extsys} we get that
\begin{align*}
\ker(sI - A_0^\ast) \cap \ker(E^\ast) = \{0\} \quad \text{for all~~} s \in \overline{\C}_0^+.  
\end{align*}
Since the pair $(A_0, E)$ is exponentially stabilizable, by Theorem 5.2.6 in  \cite{CurZwa95} the extended operator $A$ satisfies all conditions of Theorem 5.2.11 in \cite{CurZwa95}. To prove the stabilizability of the extended system $(A, B)$, we will check that 
\begin{align*}
\ker(sI - A^\ast) \cap \ker(B^\ast) = \{0\} \quad \text{for all~~} s \in \overline{\C}_0^+.  
\end{align*}
If $(v,u)^\top \in \ker(sI - A^\ast) \cap \ker(B^\ast)$, for $s \in \overline{\C}^+_0$ then 
\begin{align} \label{eq-stab-gencon}
\begin{cases}
(sI - A_0^\ast) v = 0, \\
 (-\Arc E)^\ast v + (s - \eta ) u = 0, \\ 
  -E^\ast v + u = 0. 
\end{cases}
\end{align}

(i.) If $\Arc E = 0$, the conditions \eqref{eq-stab-gencon} are rewritten as $(sI - A_0^\ast) v = 0$, $(s - \eta) u = 0$, $-E^\ast v + u = 0$. 
For $s \in \overline{\C}_0^+ \setminus \eta $, it follows that $u = 0$ and $(sI - A_0^\ast) v = 0$ and $E^\ast v = 0$. It is equivalent that $v \in \ker(sI - A^\ast) \cap \ker(E^\ast)$. Thus $v = 0$.
For $s = \eta$, since $\eta \in \rho(A_0)$, we get that $v = 0$. It follows that $u = 0$. 

Finally, for all $s \in \overline{\C}_0^+$, we get that $\ker(sI - A^\ast) \cap \ker(B^\ast) = \{0\}$. Therefore we conclude that the extended system $(A, B)$ is stabilizable. 

(ii.) We consider the case as $\Arc E \neq 0$. For $s \in \overline{\C}_0^+ \cap \rho(A_0^\ast)$, we get that $v=0$ and then $u = 0$. 

For $s \in  \sigma^+(A_0^\ast)$, we rewrite $u = E^\ast v$ and
\begin{align*}
0 &= (\Arc E)^\ast v - (s - \eta) u  = (\Arc E)^\ast v + (\eta  - s) E^\ast v  
= \left(\Arc E + (\eta - \bar{s} ) E \right)^\ast v 
\end{align*}
It follows that $v \in \ker \left( \left(\Arc E + (\eta - \bar{s} ) E \right)^\ast \right)$. Moreover $v \in  \ker \left((\bar{s} I - A_0)^\ast \right)$. Under the additional assumption \eqref{eq-stab-cond}, we get that $v = 0$, and then $u = 0$. 

In conclusion for all $s \in \overline{\C}_0^+$, we have that $\ker(sI - A^\ast) \cap \ker(B^\ast) = \{0\}$ and thus the extended system $(A,B)$ is stabilizable.  
\end{proof}
\begin{rem}
In \cite[Exercise 5.25]{CurZwa95}, we need  the assumption $0 \in \rho (A_0)$ to obtain the detectability and stabilizability of the extended systems $(A,B,C)$. In our approach, we instead require $\eta \in \rho(A_0)$. This condition is less restrictive since we can freely choose $\eta >0$ . 
\end{rem}

\begin{rem}
The additional conditions \eqref{eq-detec-cond} and \eqref{eq-stab-cond} to guarantee the detectability and stabilizability of the extended system in Lemmas \ref{lem-detec-extsys} and  \ref{lem-stab-extsys} are checkable. We need to check \eqref{eq-detec-cond} for only $\eta$ and  \eqref{eq-stab-cond} for finite $s \in \sigma^+(A_0)$. Under the Galerkin approximation, we can easily verify these conditions \eqref{eq-detec-cond} and \eqref{eq-stab-cond} by using the approximations of all operators. We then check these conditions below 
\begin{align*}
\ker\left(\eta I - A^N \right)~\cap~\ker\left(C^N\right) &= \{0\}, \\
\ker\left(\left(sI - A_0^N\right)^\ast\right) \cap \ker \left(\left(\Arc^N E^N  +  (\eta - s)E^N \right)^\ast \right)  &= \{0\} \qquad  \text{for~} s  \in \sigma^+\left(A_0^N\right). 
\end{align*}
\end{rem}

In the following, we present a block diagram of the algorithm for robust regulation of boundary control systems. 

\newpage

\subsection{The algorithm} \label{subsec-Algo}

\begin{center}
\begin{tikzpicture}[auto, thick, node distance=2.3cm, text width=10cm]
\node at (-5,1) [above=5mm, right=0mm] {\textsc{Extended system}};
\node[process] (Ext){
\textbf{Step E1. Extension $E$} \\
Construct an extension $E$ by solving a system \\
$\Ad E u = \eta E u,\quad \cB E u = u.$ 
};
\node (ConExt) [process, below of=Ext]{ 
\textbf{Step E2. Extended system $(A, B, C)$} \\
Construct an extended system $(A, B, C)$ where \\
$
A = \pmat{A_0 & \Arc E \\ 0 & \eta I}, \quad 
B = \pmat{-E \\ I}, \quad C = \pmat{C_0 & C_0 E}. 
$
};
 \draw[->]     (Ext) -- (ConExt);
     \draw[blue,thick,dotted] ($(Ext.north west)+(-0.2,0.5)$)  rectangle ($(ConExt.south east)+(0.2,-0.5)$);
    
\node at (-5,-5) [above=8mm, right=0mm] {\textsc{The controller}};
 \node (InMod)   [process, below of=ConExt, yshift = -0.9cm ]          {
\textbf{Step C1. The Internal Model} \\  
Choose $G_1$ and $G_2$ incorporating the internal model.
  };
  \node (Gar) [process, below of=InMod]   {
 \textbf{Step C2. The Galerkin Approximation} \\
Fix $N \in \N$, apply the Galerkin approximation to \\
operators $(A_0, \Arc, E, C_0)$  to get their corresponding \\ matrices $(A_0^N, \Arc^N, E^N, C_0^N)$.  Then compute the matrices $(A^N, B^N, C^N)$ as the approximations of $(A,B,C)$. 
};

\node[process, below of=Gar] (Stab){
 \textbf{Step C3. Stabilization} \\
Choose $L^N, K_1^N, K_2^N$ by solving finite-dimensional Riccati equations with the matrices $(A^N, B^N, C^N)$ and $(G_1, G_2)$.
};
  \node (ModRed) [process, below of=Stab]   {
   \textbf{Step C3. The Model Reduction} \\
Fix $r \le N$, use Balanced Truncation Method to get a stable $r-$dimensional system $(A_L^r,[B_L^r ,\; L^r] ,K_2^r)$
  };
  \draw[->]     (InMod) -- (Gar);
  \draw[->]		(Gar) -- (Stab); 
  \draw[->]     (Stab) -- (ModRed);
  \draw[->]    (ConExt) -- (InMod);
  \draw[red,thick,dotted] ($(InMod.north west)+(-0.2,0.5)$)  rectangle ($(ModRed.south east)+(0.2,-0.5)$);
\end{tikzpicture}
\end{center}

\section{Boundary control of parabolic partial differential equations} \label{sec-para}
We consider controlled parabolic equations with Dirichlet boundary controls, for time $t >0$,  in a $C^\infty\text{-smooth}$ domain $\Omega \subset \R^d$ with $d$ a positive integer, located locally on one side of its boundary $\partial \Omega = \Gamma_c \cup \Gamma_u,~~\Gamma_c \cap \Gamma_u = \emptyset$ as follows  
\begin{subequations} \label{eq-para}
\begin{align}
\frac{\partial w}{\partial t}(\xi, t) - \nu \Delta w (\xi, t)  + \alpha(\xi) w(\xi, t)+ \nabla \cdot \left( \beta (\xi) w (\xi, t) \right) &= 0,  \quad w(x,0) = w_0(\xi),\\
w (\bar{\xi}, t) = \sum \limits_{i = 1}^m {u_i(t)  \psi_i(\bar{\xi})} \text{~~for~~} \bar{\xi}\in \Gamma_c,\quad w (\bar{\xi}, t)  &= 0  \text{~~for~~} \bar{\xi}\in \Gamma_u.  
\end{align}
\end{subequations} 
In the variable $(\xi, \bar{\xi}, t)\in \Omega \times \Gamma \times (0, +\infty)$, the unknown in the equation is the function $w = w(\xi,t) \in \R$. The diffusion coefficient $\nu$ is a positive constant. The functions $\alpha : \R \to \R$ and $\beta: \R^d \to \R$ are fixed and depend only on $\xi$. Function $w_0$ is known. We also assume that $\alpha \in L^\infty (\Omega, \R)$ and $\beta \in L^\infty (\Omega, \R^d)$. 

The functions $ \psi_i (\bar{\xi})$ are fixed and will play the role of boundary actuators. The control input is $u(t) = (u_i(t))_{i=1}^m \in U = \C^m$ (see \cite{PhanRod18} and example below). 

Analogously we assume the system has $p$ measured outputs so that \\ $y(t) = (y_k(t))_{k=1}^p \in Y = \R^p$ and
\begin{align*}
y_k(t) = \int_\Omega {w(\xi,t) c_k (\xi) d\xi}, 
\end{align*}
for some fixed $c_k(\cdot) \in L^2(\Omega,\R)$. The output operator $C_0 \in \cL(X_0,Y)$ is such that $C_0 w  =  \left( \langle w, c_k \rangle_{L^2(\Omega)} \right)_{k=1}^p$ for all $w \in X_0$.

\subsection{Constructing the extended system}
We choose $X_0 = L^2(\Omega, \R)$, $V_0 = H_0^1 (\Omega, \R)$ and denote $X = X_0 \times U,~~V = V_0 \times U$. Denote $v = w - Eu$, $\Ad w \coloneqq \nu \Delta w$ and   $\Arc w \coloneqq -\alpha w - \nabla \cdot (\beta w)$. \\
For each actuator $\psi_i \in H^{\frac{3}{2}}(\Gamma)$, we choose the extension $\Psi_i \in H^2(\Omega)$ which solves the elliptic equations 
\begin{align}\label{eq-ext-elliptic}
\nu \Delta \Psi_i = \eta \Psi_i, \qquad \Psi_i\mid_{\Gamma_c} = \psi_i, \qquad \Psi_i\mid_{\Gamma_u} = 0.
\end{align}
We then set the operator $E: U \to S_\Psi$ with $S_\Psi \coloneqq \Span \{ \Psi_i \mid i \in \{1,2,\dots, m \} \} $ as
\begin{align*}
Eu \coloneqq \sum_{i=1}^m u_i \Psi_i. 
\end{align*}
 
We rewrite the boundary control problem with the new state variable $x = (v, u)^\top = (w-Eu, u)^\top$. 
The new dynamic control variable $\kappa(t) \in U$ is defined as $\kappa_i (t) = \dot{u}_i(t) - \eta  u_i(t)$ for all $i \in \{1, \dots, m \}$. The new input operator $B \in \cL(U, X)$ is such that $B \kappa = \pmat {-E \\ I} \kappa = \pmat {-\sum_{i=1}^m \kappa_i \Psi_i  \\ \kappa}$  for all $\kappa \in U$. The new output operator $C \in \cL (X, Y)$ is such that 
\begin{align*}
C x = \left(\int_\Omega v(\xi)c_k(\xi) d\xi + \sum_{i=1}^m u_i \int_\Omega  \Psi_i (\xi) c_k (\xi) d\xi \right)_{k=1}^p
\end{align*}
for all $x \in X$. We get an extended system with $(A,B,C)$ as in \eqref{eq-extABC}.

As shown in \cite[Section V. B.]{PauPhan19}, the sesquilinear $\sigma_0$ corresponding with operator $A_0$ is bounded and coercive. Thus the sesquilinear form $\sigma$ corresponding with the extended operator $A$ here has the same properties (as shown in the proof of \eqref{the-RORP}).

\subsection{A 1D heat equation with Neumann boundary control} \label{sec-exHeatNeu}
In this section we consider a 1D heat equation with Neumann boundary control and construct the extended system by our approach. Reformulating this control system as an extended system was also considered in \cite[Example 3.3.5]{CurZwa95} with the choice of right inverse operator. We first introduce the PDE model
\begin{align*}
 \frac{\partial w}{\partial t} (\xi,t) &=  \frac{\partial^2 w}{\partial \xi^2} (\xi,t), \quad  \frac{\partial w}{\partial \xi} (0,t) = 0,~~\frac{\partial w}{\partial \xi} (1,t) = u(t), \\
w(\xi,0) &= w_0(\xi). 
\end{align*}
To construct the extended system, we define $X_0 = L^2(0,1),~U = \C$. The operator $\cA =\frac{\partial^2}{\partial \xi^2}$ is with domain 
$
D(\cA) = \big\{ h \in H^2(0,1) \mid \frac{dh}{d\xi} (0) = 0 \big\}
$
and the boundary operator $\cB h = \frac{dh}{d\xi} (1)$ with $D(\cB) = D(\cA)$.

We define operator $A_0 = \frac{d^2}{d \xi^2}$ with domain 
\begin{align*}
D(A_0) = D(\cA) \cap \ker(\cB) = \Big\{ h \in H^2(0,1) \mid \frac{dh}{d\xi} (0) = \frac{dh}{d\xi} (1) = 0 \Big\}.
\end{align*}
By choosing $\Ad = A_0$, we define $Eu(t) = g(\xi) u(t)$ where $g(\xi)$ solves the following second order ODE
\begin{align*}
g'' (\xi) = g(\xi),\quad g'(0) = 0,\quad g'(1) = 1.  
\end{align*}
By solving this ODE, we get $g(\xi) = \frac{2 \mathrm{e}}{\mathrm{e}^2-1} \cosh \xi$. By denoting an extended variable $x = (v, u)^\top$, we get an abstract system $\dot{x}(t) = A x(t) + B\kappa(t)$ where 
\begin{align*}
A = \pmat{\frac{d^2}{d\xi^2} & 0 \\0 & 1}, \quad B = \pmat{-E \\ 1}. 
\end{align*}
In \cite{CurZwa95}, the function $g(\xi) = \frac{1}{2}\xi^2$ was chosen and also defined $Eu(t) = g(\xi) u(t)$. This choice leads to another extended system with
$
A = \pmat{\frac{d^2}{d\xi^2} & 1 \\0 & 0}
$ and the same $B$. We emphasize that the corresponding operator $A$ does not coincide with the choice of $\eta = 0$ in our approach.
 We then apply the controller design in Section \ref{sec-DesignCon} for the extended systems. 

For simple PDE models, the construction of extension $E$ using our approach does not yet give significant advantages over the method presented in \cite{CurZwa95}. We will next present a two-dimensional PDE model where the construction of $E$ would not be possible by hand. 

\subsection{A 2D diffusion-reaction-convection model}
\label{sec-numex-para}
In this example, we consider the equation \eqref{eq-para} on a domain $\Omega = \left(\bigcup\limits_{i = 1}^6 \Omega_i \right) \setminus \Omega_7$ (plotted in \eqref{fig_domain2D}) where 
\begin{align*}
\Omega_1 &= \{ (\xi_1, \xi_2) \in \R^2  \mid -2 < \xi_1 < 0,~~ -1 < \xi_2 \le 1  \}, \\ 
\Omega_2 &= \{ (\xi_1, \xi_2)\in \R^2 \mid \xi_1^2 + \xi_2^2 < 1,~~ \xi_1 \ge 0,~~\xi_2 \le 0  \} , \\
\Omega_3 &= \{ (\xi_1, \xi_2) \in \R^2 \mid -1 \le \xi_1 \le 1,~~0 < \xi_2 < 2 \}, \\
\Omega_4 &= \{ (\xi_1, \xi_2) \in \R^2 \mid \xi_1^2 + (\xi_2 - 2)^2 < 1,~~  \xi_2 \ge 2  \}, \\
\Omega_5 &= \{ (\xi_1, \xi_2) \in \R^2 \mid (\xi_1+2)^2 + (\xi_2 - 2)^2 > 1,~~ -2 < \xi_1 \le -1,~~1 \le \xi_2 < 2 \}, \\
\Omega_6 &= \{ (\xi_1, \xi_2) \in \R^2 \mid (\xi_1 + 2)^2 + \xi_2^2 < 1,~~\xi_1 \le -2  \} \\
\Omega_7 &= \left\{ (\xi_1, \xi_2) \in \R^2 \mid \left(\xi_1 + \frac{3}{2} \right)^2 + \left(\xi_2 - \frac{1}{4} \right)^2 \le \frac{4}{25} \right \}. 
\end{align*}
The boundary $\Gamma$ can be described as seven segments
\begin{align*}
\Gamma_1 &= \{ (\xi_1,-1 ) \in \R^2 \mid -2 < \xi_1 < 0 \},\\
\Gamma_2 &= \{ (\xi_1, \xi_2)\in \R^2 \mid \xi_1^2 + \xi_2^2 = 1,~~ \xi_1 \ge 0,~~\xi_2 \le 0  \},\\
\Gamma_3 &= \{ (1, \xi_2) \in \R^2 \mid 0 < \xi_2 < 2 \} \\
\Gamma_4 &= \{ (\xi_1, \xi_2)\in \R^2 \mid  \xi_1^2 + (\xi_2 - 2)^2 = 1,~~\xi_2  \ge 2 \}, \\
\Gamma_5 &= \{ (\xi_1, \xi_2)\in \R^2 \mid (\xi_1+2)^2 + (\xi_2 - 2)^2 = 1,~~\xi_1 > -2,~~\xi_2 < 2 \}, \\
\Gamma_6 &= \{ (\xi_1, \xi_2)\in \R^2 \mid (\xi_1 + 2)^2 + \xi_2^2 = 1,~~\xi_1 \le -2    \} \\
\Gamma_7 &= \left\{ (\xi_1, \xi_2)\in \R^2 \mid   \left(\xi_1 + \frac{3}{2} \right)^2 + \left(\xi_2 - \frac{1}{4} \right)^2 = \frac{4}{25} \right\}. 
\end{align*}

\begin{figure}[]  
\centering
\includegraphics[width=.5\textwidth]{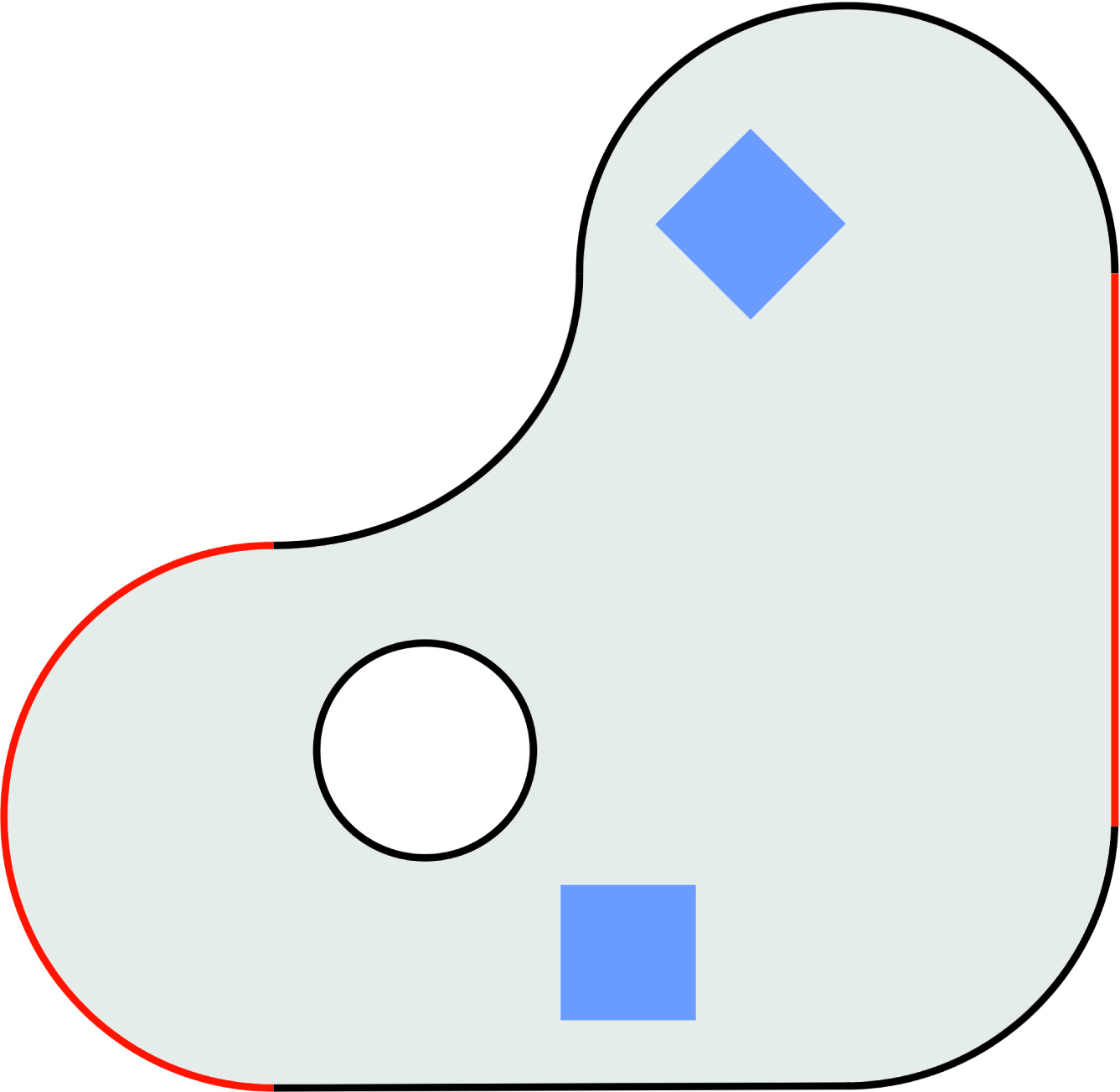}
\caption{Boundary controls located on red segments and regions of observations (blue).}
\label{fig_domain2D}
\end{figure}

We take $\nu = 0.5$, $\alpha (\xi) = 3 (\xi_1 + \xi_2),~\beta_1(\xi)= \cos (\xi_1) - \sin (2 \xi_2) - 2,~\beta_2 (\xi) = \sin (3 \xi_1) + \cos (4 \xi_2),~\beta = (\beta_1, \beta_2)$ in \eqref{eq-para}.

We consider \eqref{eq-para} with two boundary inputs located in two distinct segments $\Gamma_3$ and $\Gamma_6$ (see red segments of boundary in Figure \ref{fig_domain2D}), i.e. $\Gamma_c = \Gamma_3 \cup \Gamma_6$ and $\Gamma_c = \Gamma_1 \cup \Gamma_2 \cup \Gamma_4 \cup \Gamma_5 \cup \Gamma_7$  . On these segments, for $\bar{\xi} = \left( \bar{\xi_1}, \bar{\xi_2} \right) \in \Gamma$, we take $\psi_1 (\bar{\xi}) = \sin \left( \frac{\pi \bar{\xi_2}}{2}\right) \chi_{\Gamma_3}(\bar{\xi})$ and $\psi_2 (\bar{\xi}) = \sin \left( 3\left(\theta(\bar{\xi}) - \frac{\pi}{2} \right) \right) \chi_{\Gamma_6}(\bar{\xi})$ where $\theta(\bar{\xi}) = \arctan2 \left(  \frac{\bar{\xi_1}+2}{\bar{\xi_2}} \right) $. Next we define two extensions of boundary controls by solving elliptic equations with $\eta = \nu = 0.5$ 
\begin{align} 
\nu \Delta \Psi_i = \eta \Psi_i, \qquad \Psi_i\mid_{\Gamma_c} = \psi_i, \quad \Psi_i\mid_{\Gamma_u} = 0 \quad i \in \{1,~2\}. 
\end{align}
Two corresponding solutions $\Psi_1$ and $\Psi_2$ are plotted in Figure \ref{fig_para_ext}. 

\begin{figure}[]
\centering
\subfloat[Extension of $\psi_1$.]{\includegraphics[width=.49\textwidth]{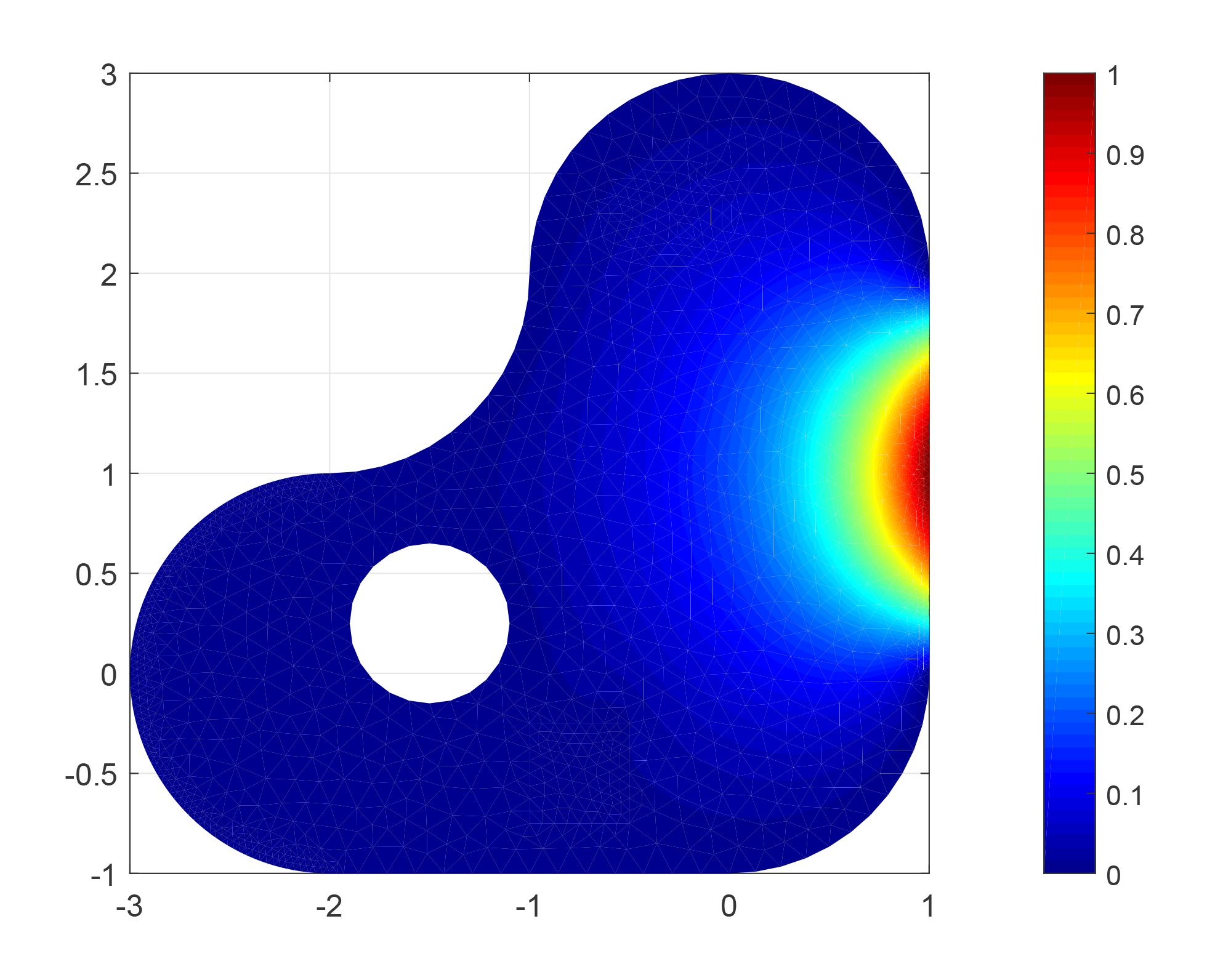}}
\subfloat[Extension of $\psi_2$.]{\includegraphics[width=.47\textwidth]{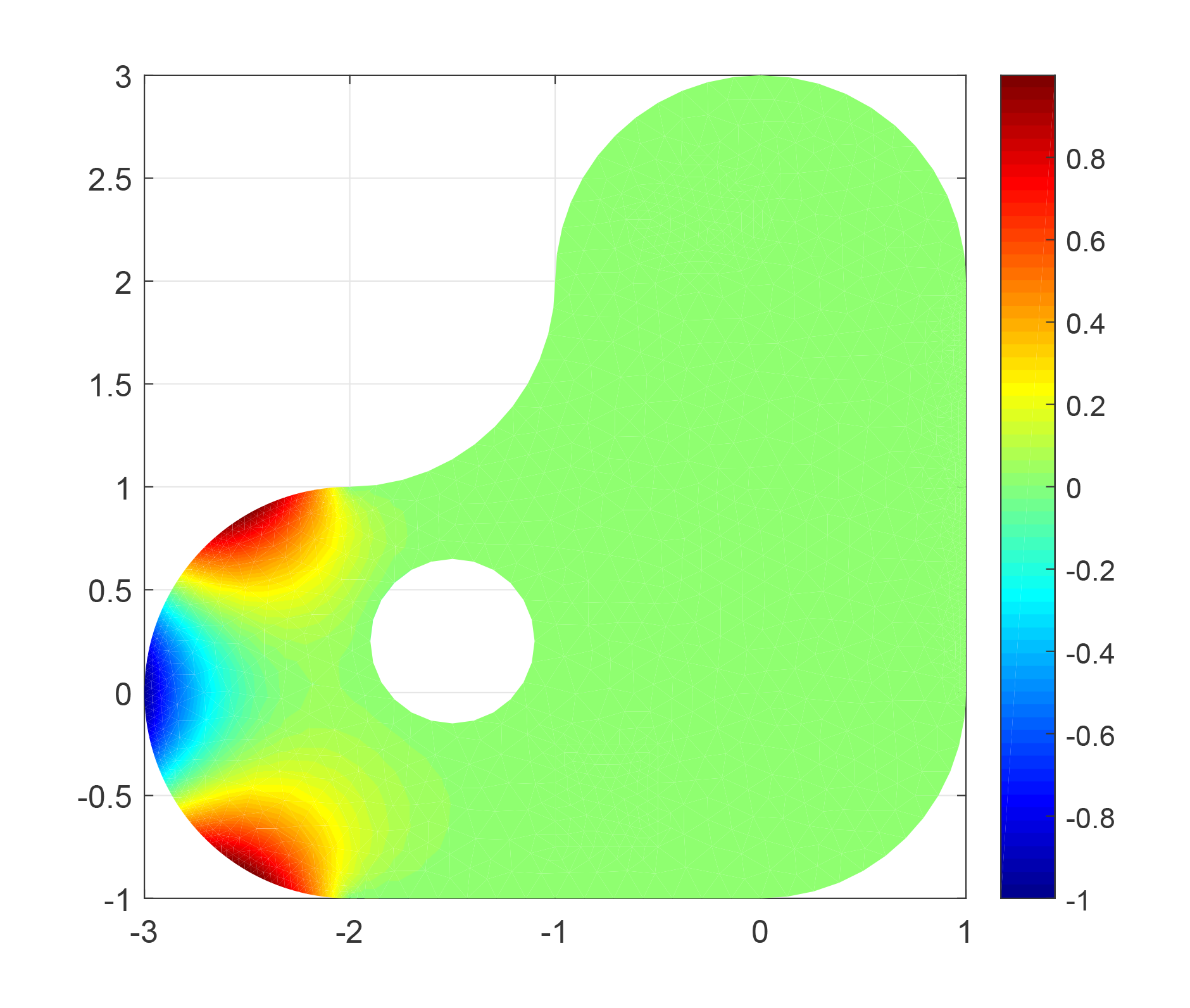}}
\caption{Two extensions of boundary actuators.}
\label{fig_para_ext}
\end{figure}

\begin{rem}
In the theoretical results, we have asked the boundary actuators to be in $H^\frac{3}{2}(\Gamma)$.  Two actuators $\psi_1 (\bar{\xi})$ and $\psi_2 (\bar{\xi})$ above are actually in $H^s(\Gamma)$ with $s < \frac{3}{2}$, but not necessarily in $H^\frac{3}{2}(\Gamma)$. This lack of regularity will be neglected in simulation. 
\end{rem}

Two measurements act on blue rectangular subdomains of $\Omega$ (see Figure \ref{fig_domain2D}). The rectangular $\Omega_{m1}$ has four corners 
$$(-1,~-.75),~~(-.5,~-.75),~~(-.5,~-.25),~~(-1,~-.25),$$ 
and the rectangular $\Omega_{m2}$ has four corners 
$$(-.3,~1.9464),~~(.0536,~2.3),~~(-.3,~2.6536),~~(0.6536,~2.3).$$
More precisely, we choose 
$c_1(\cdot) = \chi_{\Omega_{m1}} (\cdot) ,~~ c_2(\cdot) = \chi_{\Omega_{m2}} (\cdot)$. 
Our aim is to track a non-smooth periodic reference signal $\yref(t+2) = \yref(t) = (y_1(t), y_2(t)),~~\forall t \ge 0$ where 
\begin{align*}
y_1(t) = \begin{cases}
1  \qquad &\text{if~~} 0 \le t < \frac{1}{2}, \\ 
-2 t + 2 \qquad &\text{if~~} \frac{1}{2} \le   t < 1, \\
0 \qquad &\text{if~~} 1 \le   t < \frac{3}{2}, \\
2 t -3 \qquad &\text{if~~} \frac{3}{2} \le   t < {2}, 
\end{cases} 
\end{align*}
and 
\begin{align*}
y_2(t) = \begin{cases}
-t  \qquad &\text{if~~} 0 \le t < 1, \\ 
t-2 \qquad &\text{if~~} 1 \le   t < 2. \\ 
\end{cases} 
\end{align*}
This type of signals is approximated by truncated Fourier series
\begin{align*}
\yref(t) \approx a_0 (t) + \sum \limits_{k=1}^q (a_k (t) \cos ( k \pi t ) + b_k (t) \sin ( k \pi t )).
\end{align*}
Here we use $q = 10$ and the corresponding set of frequencies is $\{ k \pi \mid  k \in \{ 0, 1, \dots, 10 \} \}$ and $n_k = 1$ for all $k \in \{ 0, 1, \dots, 10 \}$. 
The domain $\Omega$ is approximated by a polygonal domain $\Omega_D$ and we consider a partition of $\Omega_D$ into non-overlapping triangles to discretize the extended system using Finite Element Method.  
We construct the observer-based controller using a Galerkin approximation with order $N = 1956$ and subsequent Balanced Truncation with order $r = 30$. The internal model has dimension $\dim Z_0 =  2 \times 2  \times 10 + 2 \times 1  \times 1 = 42$. The parameters of the stabilization are chosen as 
\begin{align*}
\alpha_1 = 0.65,~~ \alpha_2 = 0.95,~~R_1 = I_2,~~R_2 = 10^{-2}I_2. 
\end{align*}
The operators $Q_0,~ Q_1$, and $Q_2 $ are freely chosen such that $Q_2 Q_2^\ast = I_X$ and $C_c^\ast C_c = I_{Z_0 \times X}$.
Another Finite Element approximation with $M = 2688$ is constructed to simulate the original system. The initial states to solve the controlled system are $v_0 (\xi) = 0.25 \sin (\xi_1)$ and $u_0 = 0 \in \R^{42 + 30}$. The tracking signals are plotted in Figure \ref{fig_track2D}. In Figure \ref{fig_hsv2D}, the first Hankel singular values of the Galerkin approximation are plotted.

\begin{figure}[] \centering 
\includegraphics[width=.8\textwidth]{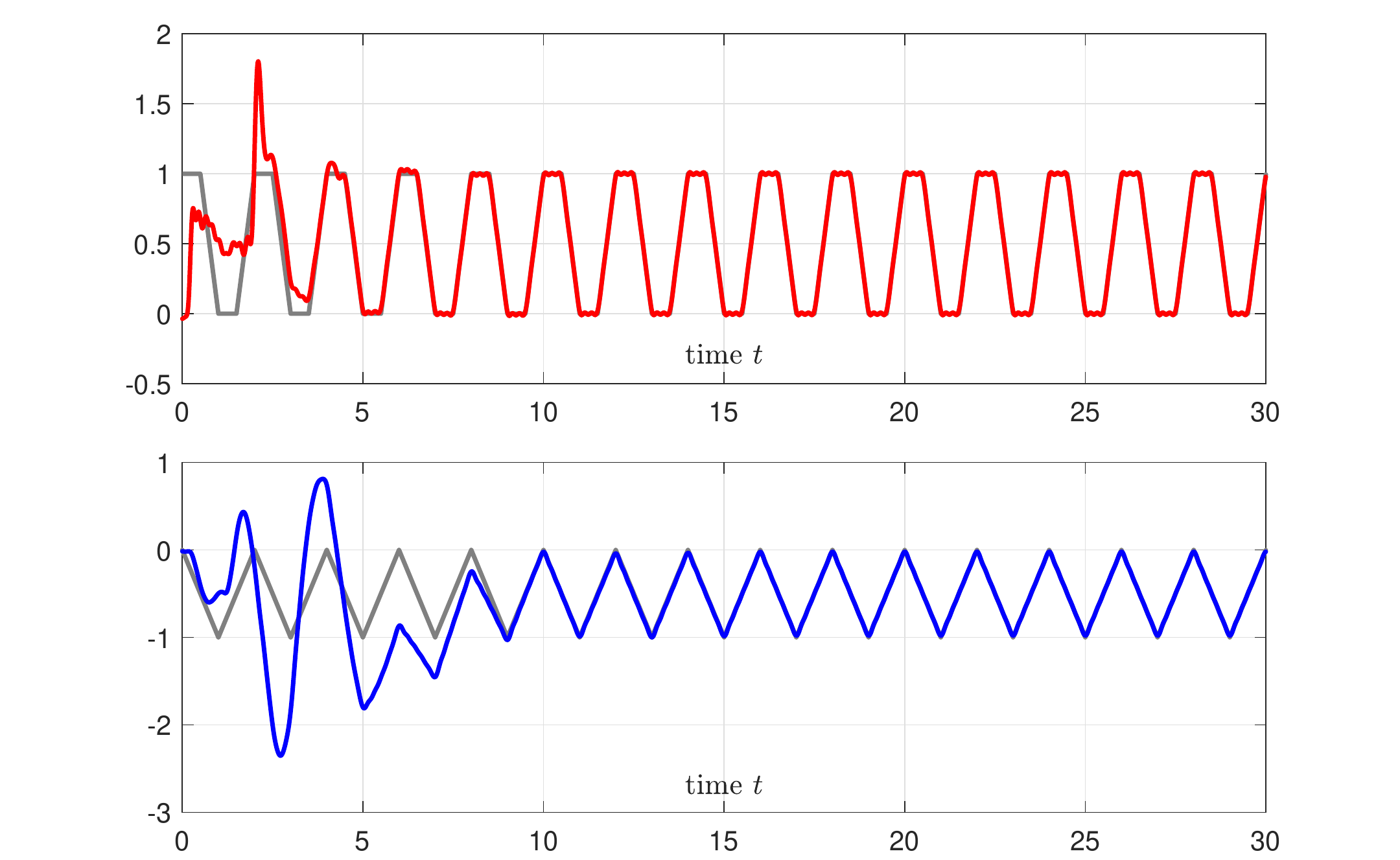}
\caption{Output tracking of the boundary control of the 2D parabolic equation.}
\label{fig_track2D}
\end{figure}

\begin{figure} \centering 
\includegraphics[width=.8\textwidth]{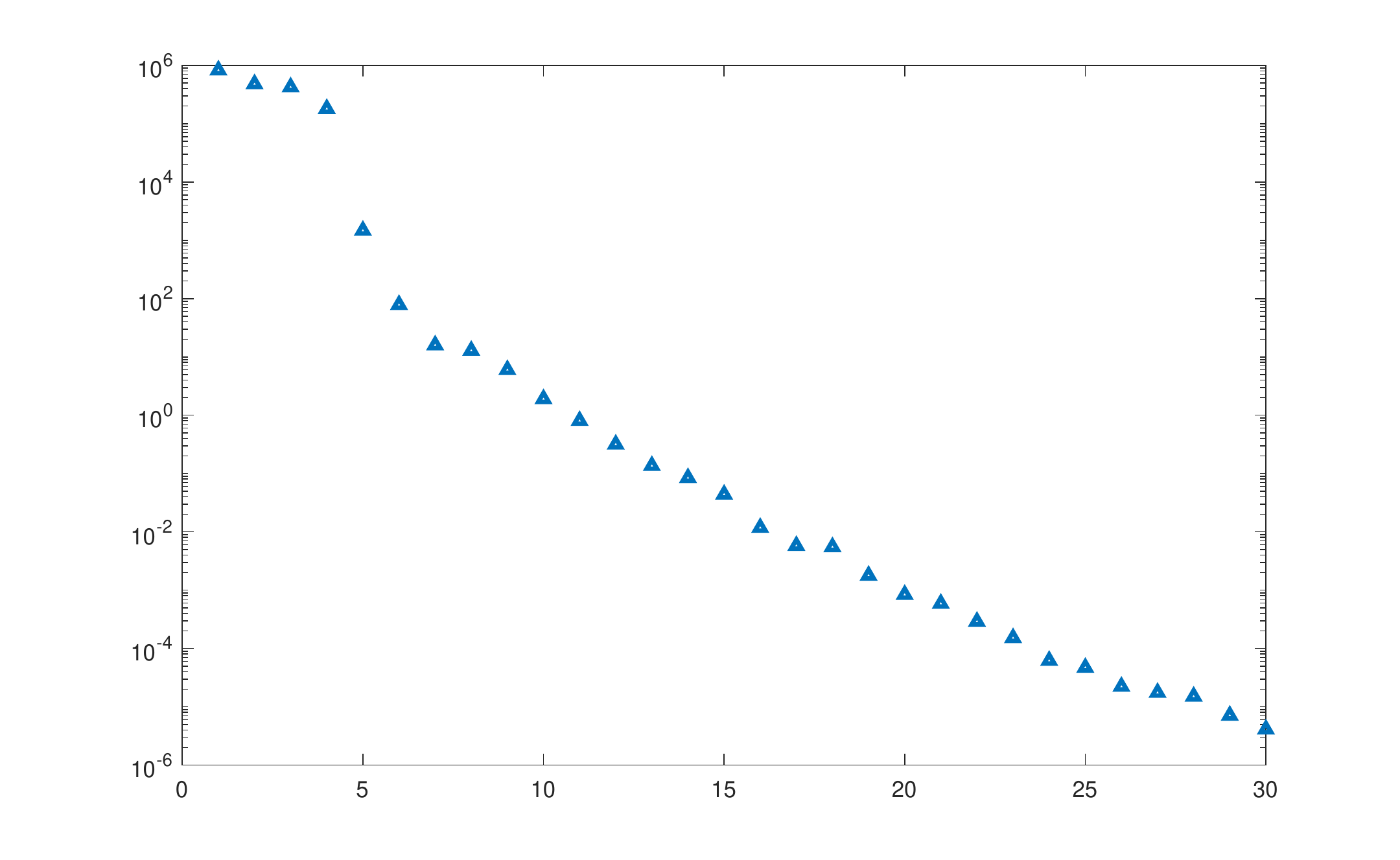}
\caption{Hankel singular values.}
\label{fig_hsv2D}
\end{figure}

\section{Boundary control of a beam equation with Kelvin-Voigt damping} \label{sec-beam}
Consider a one-dimensional Euler-Bernoulli beam model on $\Omega = (0,l)$
\begin{subequations} \label{eq-beam}
\begin{align}
\frac{\partial^2 w}{\partial t^2} (\xi,t) &+ \frac{\partial^2}{\partial \xi^2} \left( \alpha \frac{\partial^2 w}{\partial \xi^2} (\xi,t) + \beta \frac{\partial^3 w}{\partial \xi^2 \partial t} (\xi,t)  \right) + \gamma \frac{\partial w}{\partial t} (\xi,t) = 0, 
\\ 
w(\xi,0) &= w_0 (\xi), \qquad \frac{\partial w}{\partial t}(\xi,0) = w_1 (\xi), \\
y(t) &= C_1 w(\cdot, t) + C_2 \dot{w}(\cdot, t), 
\end{align}
\end{subequations}
with the  constants  $\alpha, \beta > 0$ and $\gamma \ge 0$. The measurement operators for the deflection $w(\cdot,t)$ and the velocity $\dot{w}(\cdot, t)$ are such that $C_j w  = \left(\langle w, c^j_k \rangle_{L^2} \right)_{k=1}^p \in Y = \R^p$ for $w \in L^2(0,l)$ and $j = 1,2$ for some fixed functions $c^j_k(\cdot) \in L^2 (0,l)$.
We consider boundary conditions 
\begin{align*}
w(0,t) = \frac{\partial w}{\partial \xi} (0,t) =  \frac{\partial^3 w}{\partial \xi^3} (l,t) =  0, \quad \frac{\partial^2 w}{\partial \xi^2} (l,t) = u(t),
\end{align*}
where $u(t)$ is the boundary input at $\xi = l$. This type of boundary controls was considered in \cite[Section 10.4]{TucWei09} and \cite{Guo14} (with boundary disturbance signals).  

Let $W_0 = \left\{ w \in H^2(0, l) \mid   w(0) = \frac{dw}{d\xi}(0) = 0  \right\}$ and define the inner product on $W_0$ by 
\begin{align*}
\langle w_1 , w_2 \rangle_{W_0}  = \int_0^l w_1 (\xi) w_2(\xi) d\xi, \quad \forall w_1, w_2 \in W_0. 
\end{align*}
We define the spaces $X_0 = W_0 \times L^2 (0,l) $, $V_0 = W_0 \times W_0$ and the operator 
\begin{align*}
\cA &= \pmat{0 & \bI \\ -\alpha \frac{\partial^4}{\partial \xi^4} & -\beta \frac{\partial^4}{\partial \xi^4} - \gamma}, \\
\text{with~} D(\cA) &= \left \{ (w_1, w_2) \in V_0 \mid \alpha \frac{d^2}{d\xi^2} w_1 + \beta \frac{d^2}{d\xi^2} w_2 \in H^2(0,l), ~~ \frac{d^3 w_1}{d\xi^3}(l) = 0    \right \}.  
\end{align*}
The boundary operator $\cB:  X_0 \to \C$ denotes by $\cB \pmat{w_1 \\ w_2} = \frac{d^2 w_1}{d\xi^2} (l)$.\\
The operator $A_0$ is given by 
$
A_0 =  \pmat{0 & \bI \\ -\alpha \frac{\partial^4}{\partial \xi^4} & -\beta \frac{\partial^4}{\partial \xi^4} - \gamma}
$ with the domain
\begin{align*}
D(A_0) &= D(\cA)~\cap~\cN (\cB) \\
&= \left \{ (v_1, v_2) \in V_0 \mid \alpha \frac{d^2 v_1}{d\xi^2}  + \beta \frac{d^2 v_2}{d\xi^2 }  \in H^2(0,l), ~~ \frac{d^2 v_1}{d\xi^2}(l) = \frac{d^3 v_1}{d\xi^3}(l) = 0    \right \}.
\end{align*}

\subsection{The extended system}
\label{sec-beam-1stext}

Choose $\Ad = \pmat{0 & \bI \\ -\alpha \frac{\partial^4}{\partial \xi^4} & -\beta \frac{\partial^4}{\partial \xi^4}}$ and \\
$\Arc = \pmat{0 & 0 \\ 0 & -\gamma} $ with $D(\Ad) =  D(\cA)$ and $D(\Arc) = X_0$. 
We construct $Eu(t) = \pmat{g_1(\xi)\\ g_2(\xi)} u(t)$ satisfying both conditions \eqref{ass-cond1} and \eqref{ass-condBC} as follows 
\begin{align*}
\pmat{0 & \bI \\ -\alpha \frac{\partial^4}{\partial \xi^4} & -\beta \frac{\partial^4}{\partial \xi^4}} \pmat{g_1(\xi)\\ g_2(\xi)} &= \eta \pmat{g_1(\xi)\\ g_2(\xi)}, \qquad 
\cB \pmat{g_1(\xi)\\ g_2(\xi)} = 1. \\
g_1(0) = g_1'(0) = g_1'''(l) &= 0, \quad  g_2(0) = g_2'(0) = 0. 
\end{align*} 
We need to solve a system of ODEs as follows
\begin{subequations} \label{eq-beam-1stode}
\begin{align}
g_2(\xi) &= \eta g_1(\xi), \\
g_1''''(\xi) &= \frac{-\eta^2}{\alpha + \beta \eta} g_1(\xi), \\
g_1(0) = g_1'(0) = g_1'''(l) &= 0, \quad g_1''(l) = 1, \\
g_2(0) = g_2'(0)  &= 0.
\end{align}
\end{subequations}
We have freedom of choices on boundary conditions of $g_2$. Here we choose $g_2'''(l) = 0$, and $g_2''(l) = \eta$. The condition $\alpha g''_1 + \beta g_2''   \in H^2(0,l)$ can be verified after solving the system. 

Define the change of variable
$
\pmat{v \\ \dot{v}} = \pmat{w \\ \dot{w}} - Eu(t), 
$
and the new control $\kappa(t) = \dot{u}(t) - \eta u(t)$. The extended system can be rewritten in terms of the new state $x = (v, \dot{v}, u)^\top $in the abstract form $\dot{x}(t) = A x(t) + B\kappa(t) $ where
\begin{align} \label{beam-eq-1stext}
A = \pmat{0 & \bI & 0  \\ 
-\alpha \frac{\partial^4}{\partial \xi^4} & -\beta \frac{\partial^4}{\partial \xi^4} - \gamma & -\gamma g_2(\xi) \\
0 & 0 & \eta \\
}, \quad 
B= \pmat{-g_1(\xi)  \\ -g_2(\xi) \\ 1 }. 
\end{align} 
The sesquilinear associated to the operator $A_0$ is bounded and coercive (see\cite{ItoMor98} and \cite[Section V.C.]{PauPhan19}). Thus the sesquilinear generated by operator $A$ in \eqref{beam-eq-1stext} is also bounded and coercive as shown in the proof of Theorem \ref{the-RORP}.  

The observation part can be rewritten in the new state 
\begin{align*}
y(t) =  C_1 v(\cdot, t) + C_2 \dot{v}(\cdot, t) = C_1 (w (\cdot, t) + g_1(\cdot) u(t) ) + C_2( \dot{w}(\cdot, t) + g_2(\cdot) u (t) )
\end{align*}
which implies that $C = \pmat{C_1 & C_2 & C_1 g_1 + C_2 g_2} $. Under this setting, the extended system $(A,B,C)$ can be rewritten in the abstract form as in \eqref{eq-sys-ext}-\eqref{eq-obs-ext}.

\subsection{An alternative extended system}
\label{sec-beam-2ndext}
For second-order (in time) PDE models, we can use an alternative approach to construct the extended system. For this class of system, this approach here is more natural than the first one. However it still has some disadvantages that we will discuss below. 

Let us define $v(\xi,t) = w(\xi,t) - g(\xi) u(t)$ where $g(\xi)$ solves the ODE  
\begin{subequations} \label{eq-beam-2ndode}
\begin{align}
g'''' (\xi) - \eta g (\xi) &= 0, \\
g(0) = g'(0) = g'''(l) &= 0, \\
g''(l) &= 1
\end{align}
\end{subequations}
where $\eta$ is a positive constant. Then we can rewrite the equation \eqref{eq-beam} as follows 
\begin{subequations} \label{eq-pde-ext}
\begin{align}
\frac{\partial^2 v}{\partial t^2}(\xi,t) &+ \alpha \frac{\partial^4 v}{\partial \xi^4}(\xi,t) + \left( \beta  \frac{\partial^4 }{\partial \xi^4}  + \gamma \right) \frac{\partial v}{\partial t}(\xi,t)  \\
&= - \left( u''(t) + (\beta \eta + \gamma)  u'(t) + \alpha \eta u(t) \right)  g(\xi), \\
v(0,t) &= \frac{\partial v}{\partial \xi} (0,t)  = \frac{\partial^2 v}{\partial \xi^2} (l,t) = \frac{\partial^3 v}{\partial \xi^3} (l,t) = 0. 
\end{align}
\end{subequations} 

Defining $\kappa(t) =  u''(t) + (\beta \eta + \gamma)  u'(t) + \alpha \eta u(t)$, we get an alternative extended system $\dot{x}(t) = A x(t) + B \kappa(t)$ where 
\begin{align} \label{beam-eq-2ndext}
A = \pmat{0 & \bI & 0 & 0 \\ 
-\alpha \frac{\partial^4}{\partial \xi^4} & -\beta \frac{\partial^4}{\partial \xi^4} - \gamma & 0 & 0  \\
0 & 0 & 0 & 1 \\
0 & 0 & -\alpha \eta & - \beta \eta  - \gamma
\\}, \quad  
B = \pmat{ 0 \\ - g(\xi) \\  0 \\  1 }. 
\end{align}
The observation part can be rewritten in the new state as 
\begin{align*}
y (t) = C_1 v(\cdot, t) + C_2 \dot{v}(\cdot, t) = C_1 (w (\cdot, t) + g(\cdot) u(t) ) + C_2( \dot{w}(\cdot, t) + g(\cdot) u' (t) ), 
\end{align*}
which leads to the output operator
$
C = \pmat{C_1 & C_2 & C_1 g & C_2 g}
$.

\begin{lem}
Consider the abstract differential equation $\dot{x}(t) = A x(t) + B \kappa(t)$ where $A$ and $B$ are defined in \eqref{beam-eq-2ndext}. Assume that $u \in C^3([0, \tau]; \R)$ for all $\tau >0$ and $(v_0, v_1) = (w_0 - gu(0), w_1 - gu'(0)) \in D(A_0)$. The extended system with $(x_0)_1 = v_0,~(x_0)_2 = v_1,~(x_0)_3 = u(0),~(x_0)_4 = u'(0)$ has a unique solution $x(t) = (v(t), \dot{v}(t), u(t), u'(t))^\top$. 
\end{lem}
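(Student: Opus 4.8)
The plan is to exploit the block structure of the generator $A$ in \eqref{beam-eq-2ndext} and then mimic the reasoning of Theorem \ref{the-change-var} (which follows \cite[Theorem 3.3.4]{CurZwa95}). Writing $X = X_0 \times \C^2$, I first observe that $A$ decouples as a direct sum $A = A_0 \oplus A_u$, where $A_0 = \pmat{0 & \bI \\ -\alpha\frac{\partial^4}{\partial\xi^4} & -\beta\frac{\partial^4}{\partial\xi^4}-\gamma}$ acts on the first two components and $A_u = \pmat{0 & 1 \\ -\alpha\eta & -\beta\eta-\gamma}$ is a constant $2\times 2$ matrix; indeed all the off-diagonal blocks in \eqref{beam-eq-2ndext} vanish, so the only coupling between the two halves is through $B$. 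Since the sesquilinear form associated with $A_0$ is bounded and coercive, $A_0$ generates an analytic $C_0$-semigroup on $X_0$, and the bounded operator $A_u$ generates one on $\C^2$. Hence $A$ generates a $C_0$-semigroup on $X$ with domain $D(A) = D(A_0)\times\C^2$.

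Next I would check the hypotheses of the inhomogeneous existence–uniqueness theorem \cite[Theorem 3.1.3]{CurZwa95}. The input operator $B = (0,\,-g,\,0,\,1)^\top$ is bounded, because $g$ is a smooth solution of the fourth-order boundary value problem \eqref{eq-beam-2ndode} and so lies in $L^2(0,l)$, making multiplication by $g$ a bounded map into the $L^2$-slot of $X_0$. Moreover, since $\kappa(t) = u''(t) + (\beta\eta+\gamma)u'(t) + \alpha\eta u(t)$ and $u \in C^3([0,\tau];\R)$, the new control satisfies $\kappa \in C^1([0,\tau];\R)$; this is exactly where the hypothesis $u\in C^3$ (rather than $C^2$, as in the first extended system) is used, the extra derivative being dictated by the second-order expression defining $\kappa$. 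Finally, the initial data lies in $D(A)=D(A_0)\times\C^2$ precisely because $(v_0,v_1)\in D(A_0)$ by assumption while the last two entries are scalars. Therefore \cite[Theorem 3.1.3]{CurZwa95} yields a unique classical solution $x(t)$ of $\dot x = Ax + B\kappa$.

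It then remains to identify the four components of $x(t)$ with $(v(t),\dot v(t),u(t),u'(t))$. Because $A$ is block-diagonal, the last two components decouple and solve the finite-dimensional system $\dot x_3 = x_4$, $\dot x_4 = -\alpha\eta\,x_3 - (\beta\eta+\gamma)x_4 + \kappa(t)$ with data $(u(0),u'(0))$. Substituting the definition of $\kappa$ shows that $(u(t),u'(t))$ solves this very system, so by uniqueness of solutions of linear ODEs we get $x_3 = u$ and $x_4 = u'$. Once these are identified, the top two components satisfy the forced abstract beam equation \eqref{eq-pde-ext} with forcing $-g\,\kappa(t)$ in the velocity slot; comparing with the change of variable $v = w - gu$, $\dot v = \dot w - gu'$ that produced \eqref{eq-pde-ext} from \eqref{eq-beam}, and invoking uniqueness of the classical solution once more, identifies $x_1 = v$ and $x_2 = \dot v$.

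The step I expect to be the main obstacle is this last identification, namely verifying rigorously that the change of variable $v = w-gu$ is reversible at the level of classical solutions in the beam setting. One must confirm that subtracting $gu$ from the original solution preserves membership in $D(A_0)$ — in particular the regularity constraint $\alpha v_1'' + \beta v_2'' \in H^2(0,l)$ and the boundary conditions built into $D(A_0)$ — and that the forcing term arising from the transformation matches exactly the $-g\kappa$ appearing in $B\kappa$. This is the analogue, in the present product-space and second-order-in-time framework, of the argument in Theorem \ref{the-change-var} and \cite[Theorem 3.3.4]{CurZwa95}, and the bookkeeping of the two separate substitutions (the position shift $gu$ and the velocity shift $gu'$) is what requires the most care.
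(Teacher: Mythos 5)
Your proposal is correct and follows essentially the same route as the paper's (very terse) proof: the paper likewise splits $A$ into the block-diagonal pair $A_1 = \pmatsmall{0 & \bI \\ -\alpha \frac{\partial^4}{\partial \xi^4} & -\beta \frac{\partial^4}{\partial \xi^4} - \gamma}$ and $A_2 = \pmatsmall{0 & 1\\ -\alpha \eta & -\beta \eta - \gamma}$, invokes the composite-system generation result (Lemma 3.2.2 in \cite{CurZwa95}) together with the inhomogeneous Cauchy-problem theorem, and then repeats the identification procedure of Theorem \ref{the-change-var}. Your write-up simply makes explicit the steps the paper leaves implicit, including the correct observation that $u\in C^3$ is needed precisely so that $\kappa\in C^1$.
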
 
\begin{proof}
 By denoting 
$A_1 = \pmat{0 & \bI \\ -\alpha \frac{\partial^4}{\partial \xi^4} & -\beta \frac{\partial^4}{\partial \xi^4} - \gamma}$ and $A_2 = \pmat{0 & 1\\ -\alpha \eta & -\beta \eta - \gamma}$, we rewrite $A = \pmat{A_1 & 0 \\ 0 & A_2}$. Then we analogously apply Lemma 3.2.2 in \cite{CurZwa95} and the procedure of Theorem \ref{the-change-var} to get the result. 
\end{proof}

\begin{rem}
Comparing with the case of parabolic equations in Section \ref{sec-para}, the difference is that we can find the extension $Eu(t)$  explicitly by solving ODE \eqref{eq-beam-1stode} or \eqref{eq-beam-2ndode}.
  
Considering the system of ODE \eqref{eq-beam-1stode}, the characteristic equation is $\lambda + \frac{\eta^2}{\alpha + \beta \eta} = 0$.  By denoting $\tilde{\eta} = \sqrt[4]{\frac{\eta^2}{4(\alpha + \beta \eta)}}$, the solution of characteristic equation is $\lambda = \pm \tilde{\eta} \pm i \tilde{\eta}$. Thus the general solution is 
\begin{align*}
g_1 (\xi) = m_1 \mathrm{e}^{\tilde{\eta} \xi} \cos (\tilde{\eta} \xi) + 
m_2 \mathrm{e}^{\tilde{\eta} \xi} \sin (\tilde{\eta} \xi) + 
m_3 \mathrm{e}^{-\tilde{\eta} \xi} \cos (\tilde{\eta} \xi) + 
m_4 \mathrm{e}^{-\tilde{\eta} \xi} \sin (\tilde{\eta} \xi)
\end{align*}
and $g_2(\xi) = \eta g_1(\xi)$. Obviously $g_1 (\xi)$ and $g_2(\xi)$ belong to $H^2(0,l)$.
On the other hand, for the ODE \eqref{eq-beam-2ndode}, the corresponding characteristic equation is 
$
\lambda^4 - \eta  = 0 
$ 
whose solutions are $\lambda =\pm \sqrt[4]{\eta}$ and $\lambda =\pm i \sqrt[4]{\eta}$. The general solution is 
\begin{align*}
g(\xi) = m_1 \mathrm{e}^{\sqrt[4]{\eta} \xi} + m_2 \mathrm{e}^{-\sqrt[4]{\eta} \xi} + m_3 \cos(\sqrt[4]{\eta} \xi) + m_4 \sin(\sqrt[4]{\eta} \xi).
\end{align*}

All unknown parameters $m_1,~m_2,~m_3,~m_4$ can be determined from the boundary conditions by solving a corresponding linear algebraic system.

\subsection{Two approaches with other types of boundary control}
The type of boundary condition below was presented before in some works \cite[Section 3]{ItoMor98} or \cite[Section V.C]{PauPhan19}. Here we design a boundary control. 
The construction of extension operator $Eu(t)$ in section \ref{sec-beam-1stext} can be modified to adapt with this type of boundary condition
\begin{subequations}
\begin{align} \label{eq-beam-anotherBC}
w(0,t) = \frac{\partial w}{\partial \xi} (0,t) &= 0, \\  
 \alpha \frac{\partial^2 w}{\partial \xi^2} (l,t) + \beta \frac{\partial^3 w}{\partial \xi^2 \partial t} (l,t)  &= u(t),\\
 \alpha \frac{\partial^3 w}{\partial \xi^3} (l,t) + \beta \frac{\partial^4 w}{\partial \xi^3 \partial t} (l,t)    &= 0,
\end{align}
\end{subequations}
By denoting $M(\xi, t) = \alpha \frac{\partial^2 w}{\partial \xi^2} (\xi,t) + \beta \frac{\partial^3 w}{\partial \xi^2 \partial t} (\xi,t) $, we modify the domain of $\cA$ as
\begin{align*}
D(\cA) &= \left \{ (w_1, w_2) \in V \mid \alpha \frac{d^2 w_1}{d\xi^2}  + \beta \frac{d^2 w_2}{d\xi^2}  \in H^2(0,l), ~~ \frac{d M}{d \xi} (l,\cdot) = 0    \right \}.
\end{align*}
The boundary operator $\cB:  X_0 \to \C$ denotes by $\cB \pmat{w_1 \\ w_2} = \alpha \frac{d^2}{d\xi^2} w_1(l) + \beta \frac{d^2}{d\xi^2} w_2(l)$ with $D(\cB) = D(\cA)$. \\
The domain of operator $A_0$ is denoted by 
\begin{align*}
D(A_0) = \left \{ (v_1, v_2) \in V \mid \alpha \frac{d^2 v_1 }{d\xi^2} + \beta \frac{d^2 v_2}{d\xi^2}  \in H^2(0,l), ~~ M (l, \cdot) = \frac{d  M}{d \xi}(l,\cdot) = 0    \right \}.
\end{align*}
With the same choice of $\Ad$ and $\Arc$, we get the system of ODEs as follows 
\begin{align*}
g_2(\xi) = \eta g_1(\xi), \quad  g_1''''(\xi) = \frac{-\eta^2}{\alpha + \beta \eta} g_1(\xi)
\end{align*}
whose boundary conditions are modified as 
\begin{align*}
g_1(0) = g_1'(0) = g_1'''(l) &= 0,\quad g_1''(l) = \frac{1}{\alpha + \beta \eta}, \\
g_2(0) = g_2'(0) &= 0.
\end{align*}
Again, we can choose $g_2'''(l) = 0$ and $g_2''(l) = \frac{\eta}{\alpha + \beta \eta}$.

However the approach in section \ref{sec-beam-2ndext} does not work with this type of boundary conditions.

\end{rem}

\subsection{A numerical example}
 
In this example, we consider the system \eqref{eq-beam} with $l=7, \alpha = 10, \beta = 0.01$, and $\gamma = 10^{-5}$. The observation is 
\begin{align*}
y(t) = \int_2^4 {w(\xi,t) + \frac{\partial w}{\partial t} (\xi,t) d\xi}, \quad \text{i.e.} \quad C_1 = C_2 = \chi_{(2,4)}(\cdot). 
\end{align*}
With the choice of parameters, the stability margin of the system is very small (approximately $10^{-3}$). In this example, we use the boundary control to improve the stability of the original system and obtain an acceptable closed-loop stability margin. 

We want to track the reference signal $\yref(t) = \frac{1}{10}(t^2 - t)\sin(3t)$. The set of frequency has only one element $\{ 3 \}$ with $n_k = 3$. 

We also used two different meshes. Again, we use Finite Element Method with cubic Hermit shape functions as in \cite[Section V.C.]{PauPhan19}. 
We construct the observer-based finite-dimensional controller based on the algorithm in Section \ref{sec-DesignCon} using a coarse mesh with $N = 34$ (the corresponding size of the matrix $A^N$ is 138) and subsequent Balanced Truncation with order $r = 50$. The internal model has dimension $\dim Z_0 = 2 \times 3 = 6$. 

For the controller in Section \ref{sec-beam-1stext}, we choose $\eta = 0.12$ in system \eqref{eq-beam-1stode}. The corresponding solutions $g_1$ and $g_2$ are plotted in Figure \ref{fig_extbeam1}. The parameters of the stabilization are chosen as 
\begin{align*}
\alpha_1 = 0.65, \quad \alpha_2 = 0.5, \quad R_1 =  0.1, \quad R_2 = 1. 
\end{align*}
For the alternative extended system in Section \ref{sec-beam-2ndext}, we choose $\eta = 10$ in \eqref{eq-beam-2ndode}. The solution $g$ of \eqref{eq-beam-2ndode} with $\eta = 10$ is plotted in Figure \ref{fig_extbeam2}.
We choose other parameters of stabilization to improve the stability margin as
\begin{align*}
\alpha_1 = 0.75, \quad \alpha_2 = 0.5, \quad R_1 =  1, \quad R_2 = 10^{-3}. 
\end{align*}

\begin{figure}[tbhp]
\centering
\subfloat[The functions $g_1$ and $g_2$ with $\eta = 0.12$.]{\label{fig_extbeam1}\includegraphics[width=.49\textwidth]{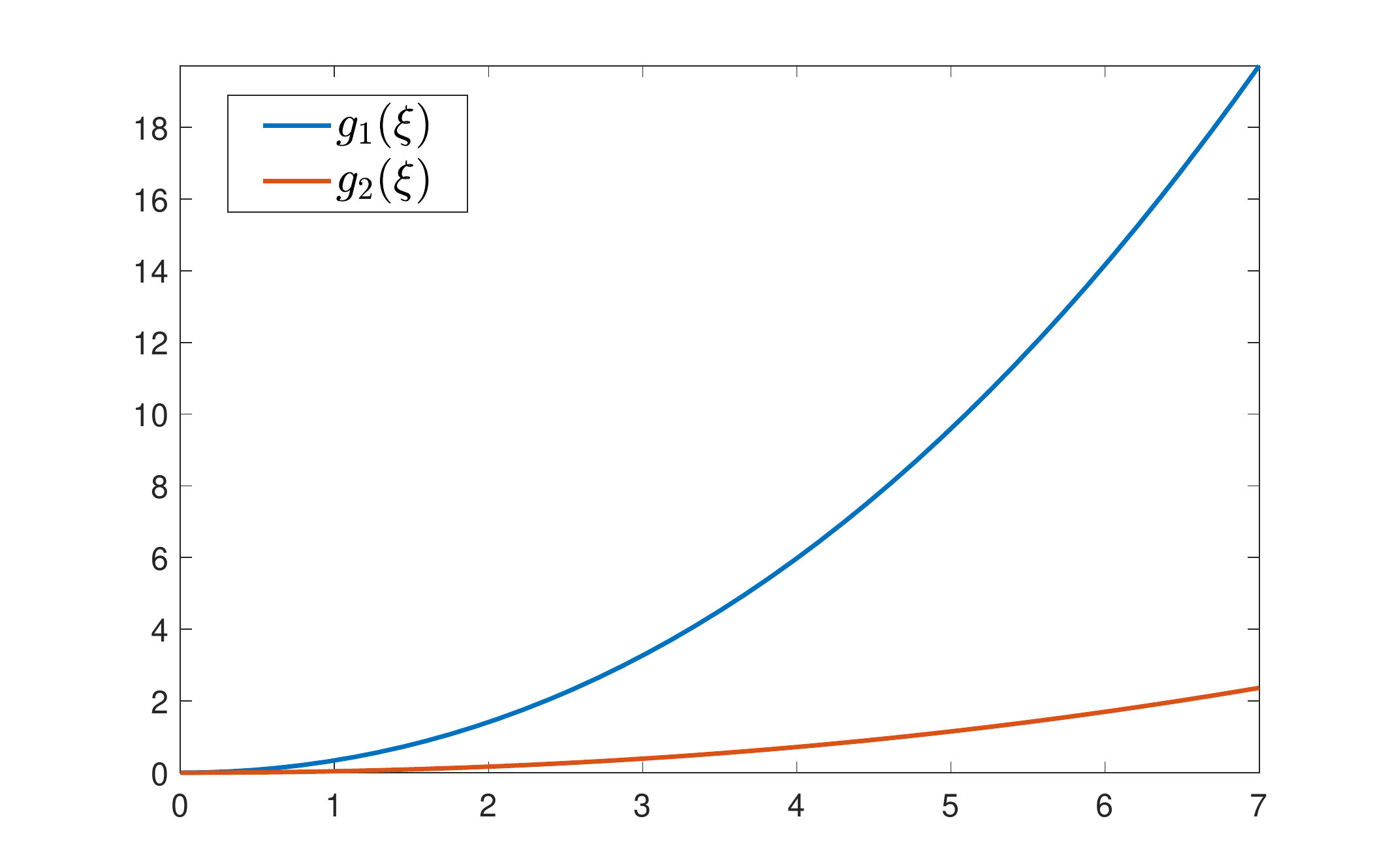}}
\subfloat[The function $g$ with $\eta = 10$.]{\label{fig_extbeam2}\includegraphics[width=.49\textwidth]{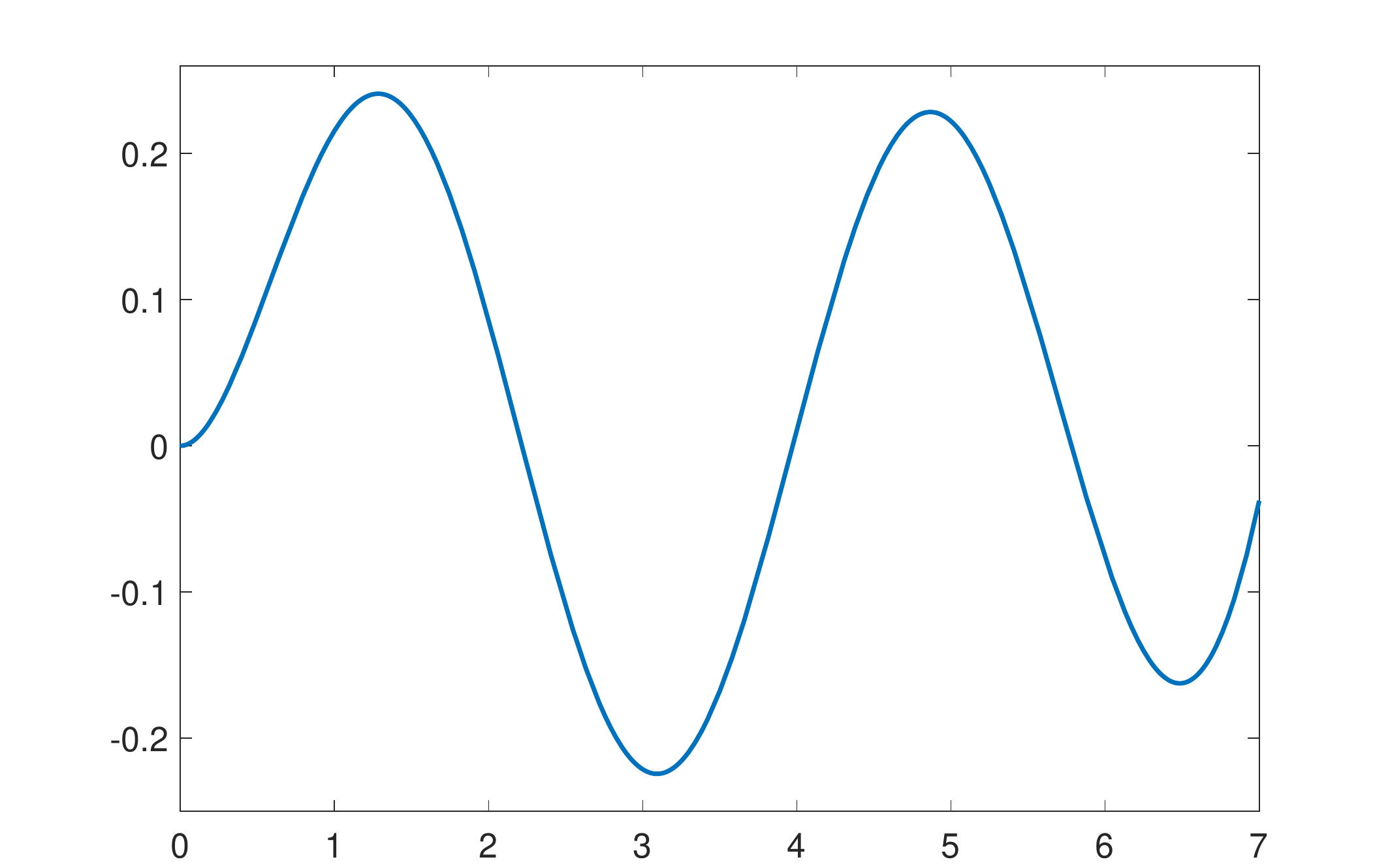}}
\caption{Solutions of ODEs.}
\label{fig:testfig}
\end{figure}

For the simulation of the original system \eqref{eq-beam}, we use another Finite Element approximation with $M = 86$. The corresponding size of matrix $A^M$ is 346. The initial state of the original systems $v_0 (\xi) = 0.25 (\cos (5 \xi) - 2)$, $v_1 (\xi) = 0.25 \sin (5\xi) $, and $z_0 =\{ v \in \R^{6+40} \mid v_i = -0.3 \}$. 

The tracking controlled signals under two different extensions are plotted in Figure \ref{fig_trackbeam} where the blue line corresponds with the extension \eqref{beam-eq-1stext} and the green one corresponds with the extension \eqref{beam-eq-2ndext}. 

\begin{figure}[]  
\centering
\includegraphics[width=.8\textwidth]{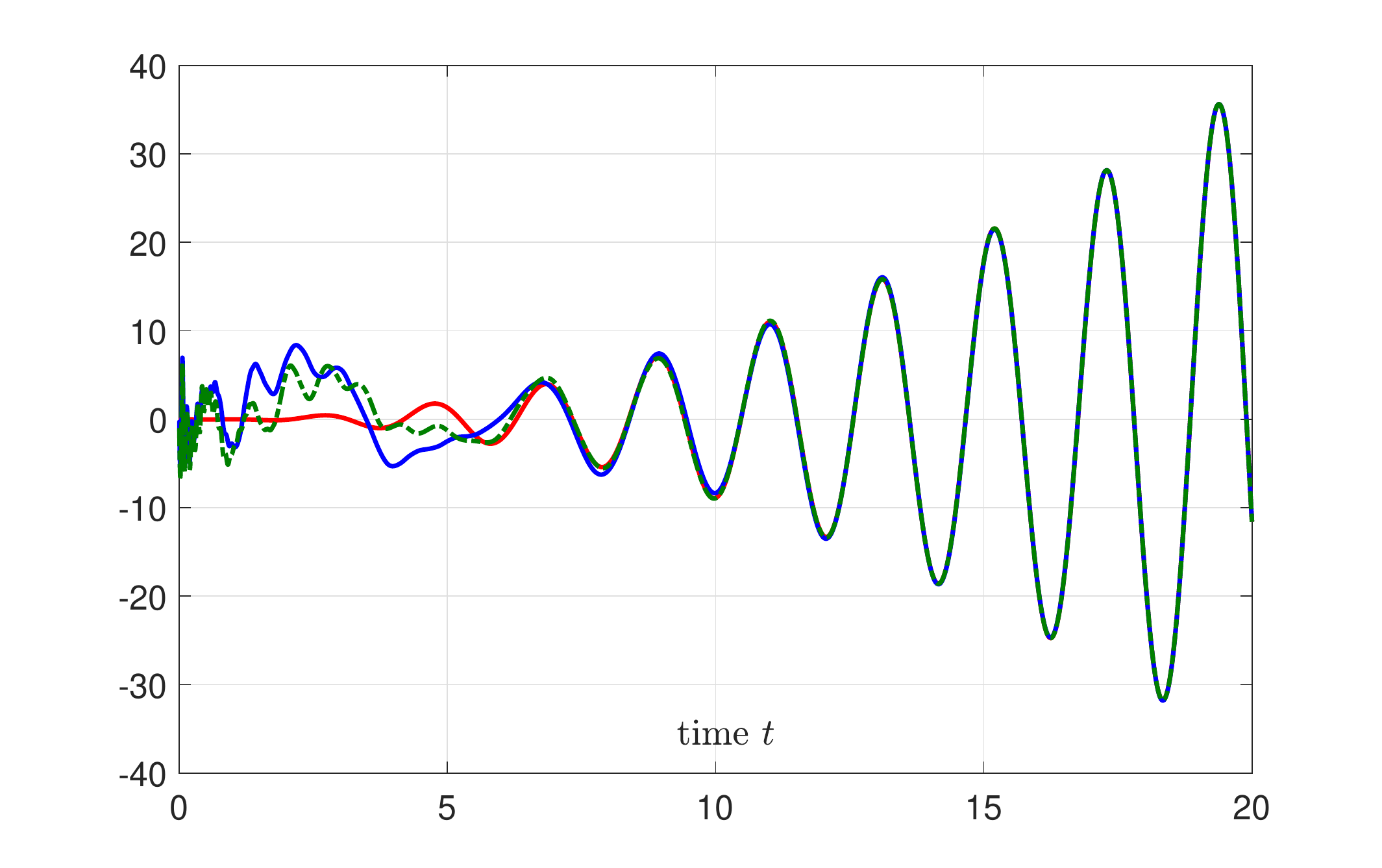}
\caption{Output tracking of the boundary controlled beam equation with two different extensions.}
\label{fig_trackbeam}
\end{figure}

\section{Final remarks}
We have presented  new methods for design finite-dimensional reduced order controllers for robust output regulation problems of boundary control systems. The controllers are constructed based on an extended system. Theorem \ref{the-RORP} shows that the controllers solve the robust output regulation problem. The construction of extended system is completed by two additional assumptions. Comparing with the choice of arbitrary right inverse operators in the literature, our construction is efficient in PDE models with multi-dimensional domains. Concerning with the boundary disturbance signals, some examples was also introduced before in \cite[Section V. A.]{PauPhan19} or \cite{Guo14}. We remark that the method can be analogously applied to construct a new bounded disturbance operator. We can then extend the control design here for the case with boundary disturbance signals. 

We must assume the boundedness of output operators because the extension approach does not have an analogue for the output operators. Moreover, the controller design method in \cite{PauPhan19} requires a bounded output operator, and extending the results for unbounded $C$ is an important topic for future research. 

As shown in the proofs in~\cite{PauPhan19}, the possibility for model reduction in the controller design (for a fixed $N\in \N$) is based on the smallness of the $H_\infty$-error between the transfer functions of the stable finite-dimensional systems $(A_L^r,[B_L^r,L^r],K_2^r)$ and $(A^N+L^NC^N,[B^N+L^ND^N,L^N],K_2^N)$. Our results do not provide lower bounds for a suitable value of $r$, but the results on Balanced Truncation show that for a given $r\leq N$ the error between these transfer functions is determined by the rate of decay of the Hankel singular values of the latter system. Because of this, rapid decay of the Hankel singular values of $(A^N+L^NC^N,[B^N+L^ND^N,L^N],K_2^N)$ can be used as an indicator that reduction of the controller order is possible for the considered system and its approximation $(A^N,B^N,C^N)$.

\section*{Acknowledgments} 
The research is supported by the Academy of Finland grants number 298182 and 310489 held by L. Paunonen. D. Phan is partially supported by Universit\"at Innsbruck. 

\providecommand{\href}[2]{#2}
\providecommand{\arxiv}[1]{\href{http://arxiv.org/abs/#1}{arXiv:#1}}
\providecommand{\url}[1]{\texttt{#1}}
\providecommand{\urlprefix}{URL }

\end{document}